\renewcommand{\mathcal}{\mathscr}
\newtheorem{theorem}{Theorem}
\newtheorem{lemma}[theorem]{Lemma}
\newtheorem{corollary}[theorem]{Corollary}
\newcommand{\R}{\mathbb R}
\newcommand{\N}{\mathbb N}
\newcommand{\CC}{\R^n\setminus }
\renewcommand{\geq}{\geqslant}
\renewcommand{\ge}{\geqslant}
\renewcommand{\leq}{\leqslant}
\renewcommand{\le}{\leqslant}
\begin{document}

\author{Bego\~na Barrios}
\address{
Departamento de Matem\'aticas\\
Universidad Aut\'onoma  de Madrid\\
Ciudad Universitaria de Cantoblanco\\
and
Instituto de Ciencias Matem\'{a}ticas, (ICMAT, CSIC-UAM-UC3M-UCM)\\
C/Nicol\'{a}s Cabrera 15, 28049-Madrid (Spain) }
\email{bego.barrios@uam.es}

\author{Alessio Figalli}
\address{
The University of Texas at Austin\\
Mathematics Dept. RLM 8.100\\
2515 Speedway Stop C1200\\
Austin, TX 78712-1202 (USA) } \email{figalli@math.utexas.edu}

\author{Enrico Valdinoci}
\address{
Dipartimento di Matematica\\
Universit\`a degli Studi di Milano\\
Via Cesare Saldini 50\\
20133 Milano (Italy) } \email{enrico@math.utexas.edu}

\title[Bootstrap regularity and nonlocal
minimal surfaces]{Bootstrap regularity\\
for integro-differential
operators\\
and its application\\
to nonlocal minimal surfaces}
\date{\today}

\begin{abstract}
We prove that $C^{1,\alpha}$ $s$-minimal surfaces are
of class $C^\infty$. For this, we develop a new bootstrap
regularity theory for solutions of integro-differential equations
of very general type, which we believe is of independent interest.
\end{abstract}

\maketitle

\tableofcontents

\section{Introduction}

Motivated by the structure of interphases arising in phase
transition models with long range interactions, in~\cite{CRS} the
authors introduced a nonlocal version of minimal surfaces. These
objects are obtained by minimizing a ``nonlocal perimeter'' inside
a fixed domain $\Omega$: fix $s \in (0,1)$, and given two sets
$A,B\subset \R^n$, let us define the interaction term
$$ L(A,B):=\int_A \int_B \frac{dx\,dy}{|x-y|^{n+s}}.$$
The nonlocal perimeter of $E$ inside $\Omega$ is defined as
\begin{multline*} {\rm Per}(E,\Omega,s):=L\big( E\cap\Omega,(\CC E)\cap\Omega\big)\\
+L\big( E\cap\Omega,(\CC E)\cap(\CC\Omega)\big)+L\big( (\CC
E)\cap\Omega,E\cap(\CC\Omega)\big).
\end{multline*}
Then nonlocal ($s$-)minimal
surfaces correspond to minimizers of the above functional with the
``boundary condition'' that $E\cap ({\CC}\Omega)$ is prescribed.

It is proved in~\cite{CRS} that ``flat $s$-minimal surface'' are
$C^{1,\alpha}$ for all $\alpha<s$, and in~\cite{CV1, ADM, CV2}
that, as $s\rightarrow 1^-$, the $s$-minimal surfaces approach the
classical ones, both in a geometric sense and in a
$\Gamma$-convergence framework, with uniform estimates
as~$s\rightarrow1^-$. In particular,
when $s$
is
sufficiently close to~$1$, they inherit some nice regularity
properties from the classical minimal surfaces  (see
also~\cite{Sou, SV1, SV2} for the relation between $s$-minimal
surfaces and the interfaces of some phase transition equations
driven by the fractional Laplacian).

On the other hand, all the previous literature only focused on
the~$C^{1,\alpha}$ regularity, and higher regularity was left as
an open problem. In this paper we address this issue, and we prove
that $C^{1,\alpha}$ $s$-minimal surfaces are indeed~$C^\infty$,
according to the following result\footnote{ Here and in the
sequel, we write~$x\in\R^n$ as~$x=(x',x_n)\in\R^{n-1}\times\R$.
Moreover, given~$r>0$ and~$p\in\R^n$, we define
$$K_r(p):= \{ x\in\R^n \,:\, |x'-p'|<r
{\mbox{ and }} |x_n-p_n|<r\}.$$ As usual, $B_r(p)$ denotes the
Euclidean ball of radius~$r$ centered at~$p$.
Given~$p'\in\R^{n-1}$, we set
$$B^{n-1}_r(p'):=\{ x'\in\R^{n-1} \,:\, |x'-p'|<r\}.$$
We also use the notation~$K_r:=K_r(0)$, $B_r:=B_r(0)$,
$B^{n-1}_r:= B^{n-1}_r(0)$.}:

\begin{theorem}\label{main} Let~$s\in(0,1)$, and~$\partial E$ be a~$s$-minimal surface
in~$K_R$ for some~$R>0$. Assume that
\begin{equation}\label{XC2}
\partial E\cap
K_R =\left\{ (x',x_n) \,:\, x' \in B_R^{n-1}{\mbox{ and }} x_n =u(x')\right\}
\end{equation}
for some
 $u:B^{n-1}_R\to \R$, with $u\in {C^{1,\alpha}} (B^{n-1}_R)$ for any $\alpha<s$ and~$u(0)=0$.
 Then
$$u\in C^{\infty}(B_{\rho}^{n-1})\quad\forall\,\rho\in (0,R).$$
\end{theorem}

The regularity result of Theorem~\ref{main} combined with
\cite[Theorem~6.1]{CRS} and \cite[Theorems 1, 3, 4, 5]{CV2},
implies also the following results (here and in the sequel, $\{
e_1, e_2,\dots, e_n\}$ denotes the standard Euclidean basis):

\begin{corollary}
Fix~$s_o\in(0,1)$. Let~$s\in(s_o,1)$ and~$\partial E$ be a~$s$-minimal
surface in~$B_R$ for some $R>0$. There exists~$\epsilon_\star>0$,
possibly depending on~$n$, $s_o$ and~$\alpha$, but independent
of~$s$ and $R$, such that if
$$ \partial E\cap B_R\subseteq \{ |x\cdot e_n|\le \epsilon_\star R\}
$$then~$\partial E\cap B_{R/2}$ is
a~$C^{\infty}$-graph in the~$e_n$-direction.\end{corollary}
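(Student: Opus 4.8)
The plan is to combine a \emph{uniform-in-$s$} $\epsilon$-regularity statement (flatness forces a $C^{1,\alpha}$ graph, with a threshold independent of $s\in(s_o,1)$) with the bootstrap provided by Theorem~\ref{main}. First I would normalize $R=1$: since $L(\lambda A,\lambda B)=\lambda^{n-s}L(A,B)$ for every $\lambda>0$, the nonlocal perimeter is homogeneous of degree $n-s$ under dilations, so being an $s$-minimal surface is invariant under $E\mapsto E/R$; the slab hypothesis then becomes $\partial E\cap B_1\subseteq\{|x\cdot e_n|\le\epsilon_\star\}$.

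For the $\epsilon$-regularity step, \cite[Theorem~6.1]{CRS} gives, for each fixed $s$, a threshold $\epsilon=\epsilon(s,n,\alpha)>0$ such that $\epsilon$-flatness in $B_1$ forces $\partial E\cap B_{1/2}$ to be a $C^{1,\alpha}$ graph in the $e_n$-direction, for every $\alpha<s$. The point is to make this threshold uniform as $s\to1^-$: here I would invoke \cite[Theorems~1, 3, 4, 5]{CV2}, which provide uniform-in-$s$ density and compactness estimates for $s$-minimizers together with the convergence of $s$-minimal surfaces to classical minimal surfaces, and hence upgrade the improvement-of-flatness of \cite{CRS} so that it holds with a threshold $\epsilon_\star=\epsilon_\star(n,s_o,\alpha)$ independent of $s$ and $R$. (Alternatively, one argues by contradiction: a sequence $\partial E_j$ of $s_j$-minimal surfaces with $s_j\to\bar s\in[s_o,1]$, contained in slabs of width tending to $0$ but never $C^{1,\alpha}$ graphs, is ruled out using the compactness of $s$-minimizers when $\bar s<1$, and the geometric convergence to classical minimal surfaces plus De~Giorgi's flatness theorem when $\bar s=1$.) Fixing $\epsilon_\star$ in this way, and shrinking it further if necessary so that the flatness also forces $\|\nabla u\|_{L^\infty}$ to be small, we obtain that $\partial E\cap B_{3/4}$ is the graph $\{x_n=u(x')\}$, $x'\in B^{n-1}_{3/4}$, with $u\in C^{1,\alpha}(B^{n-1}_{3/4})$ for every $\alpha<s$.

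Next I would bootstrap via Theorem~\ref{main}. Recall that, by the localization property of the nonlocal perimeter, an $s$-minimizer in $B_1$ is also an $s$-minimizer in every subdomain. Fix $x_o'\in B^{n-1}_{1/2}$, put $p:=(x_o',u(x_o'))\in\partial E\cap B_{3/4}$, and choose a dimensional constant $r\in(0,1/8)$ small enough that $K_r(p)\subset B_{3/4}$ and, using the smallness of $\|\nabla u\|_{L^\infty}$, that $|u(x')-u(x_o')|<r$ for all $x'\in B^{n-1}_r(x_o')$. Then the portion of $\partial E$ lying over $B^{n-1}_r(x_o')$ is contained in $K_r(p)$, so, after translating the origin to $p$ (so that $\tilde u(x'):=u(x'+x_o')-u(x_o')$ satisfies $\tilde u(0)=0$), hypothesis~\eqref{XC2} holds in $K_r$ with profile $\tilde u\in C^{1,\alpha}(B^{n-1}_r)$ for every $\alpha<s$. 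Theorem~\ref{main} then gives $\tilde u\in C^\infty(B^{n-1}_{r/2})$, i.e.\ $u$ is $C^\infty$ in a neighborhood of $x_o'$. Since $x_o'\in B^{n-1}_{1/2}$ was arbitrary, $u\in C^\infty(B^{n-1}_{1/2})$, whence $\partial E\cap B_{1/2}$ is a $C^\infty$ graph in the $e_n$-direction; undoing the dilation yields the statement for general $R$. (If desired, \eqref{E0} gives, for each $k$, an explicit bound on $\|u\|_{C^k}$, now with the usual dependence on $s$, $n$, $k$ and $R$.)

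The main obstacle is the first step---the uniformity of $\epsilon_\star$ up to $s=1$---since the thresholds coming from the fixed-$s$ theory of \cite{CRS} a priori degenerate as $s\to1^-$; one genuinely needs the uniform estimates of \cite{CV2} (or the blow-up/compactness argument above, with its split into the cases $\bar s<1$ and $\bar s=1$). Everything afterwards is routine, since Theorem~\ref{main} is an interior regularity statement that is simply applied around each point of $\partial E\cap B_{1/2}$.
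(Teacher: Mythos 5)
Your proposal is correct and follows essentially the same route as the paper, which simply derives this corollary by combining Theorem~\ref{main} with the $C^{1,\alpha}$ $\epsilon$-regularity theorem of \cite[Theorem~6.1]{CRS} and the uniform-in-$s$ estimates of \cite[Theorems~1, 3, 4, 5]{CV2}. You have correctly identified both the key ingredients and the main subtlety (making the flatness threshold $\epsilon_\star$ uniform as $s\to1^-$ via the CV2 compactness), and your reduction to local applications of Theorem~\ref{main} around each point of $\partial E\cap B_{1/2}$ is exactly the intended argument.
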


\begin{corollary}
There exists~$\epsilon_o\in(0,1)$ such that
if~$s\in(1-\epsilon_o,1)$, then:
\begin{itemize}
\item If~$n\le 7$, any $s$-minimal surface is of class
$C^{\infty}$; \item If~$n=8$, any $s$-minimal surface is of class
$C^{\infty}$ except, at most, at countably many isolated points.
\end{itemize}
More generally, in any dimension $n$ there
exists~$\epsilon_n\in(0,1)$ such that if~$s\in(1-\epsilon_n,1)$
then any $s$-minimal surface is of class $C^{\infty}$ outside a
closed set~$\Sigma$ of Hausdorff dimension $n-8$.
\end{corollary}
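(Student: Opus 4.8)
The plan is to obtain the corollary by combining two pieces of already-available machinery: the uniform-in-$s$ partial regularity theory for $s$-minimal surfaces with $s$ near~$1$, contained in \cite[Theorems~1, 3, 4, 5]{CV2} together with the flatness theorem \cite[Theorem~6.1]{CRS}, and the bootstrap result of Theorem~\ref{main}, which turns any $C^{1,\alpha}$ graph representation into a $C^\infty$ one.

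\emph{Step 1.} First I would record the following consequence of \cite[Theorems~1, 3, 4, 5]{CV2} and \cite[Theorem~6.1]{CRS}: there is $\epsilon_n\in(0,1)$, depending only on $n$, such that for every $s\in(1-\epsilon_n,1)$ and every $s$-minimal surface $\partial E$ there is a closed set $\Sigma\subseteq\partial E$ of Hausdorff dimension $n-8$ --- empty if $n\le 7$, and at most a countable union of isolated points if $n=8$ --- with the property that at each $p\in\partial E\setminus\Sigma$ the surface is flat in a small ball around $p$ in the quantitative sense required by \cite[Theorem~6.1]{CRS}, and hence coincides in a neighborhood of $p$ with the graph, over the hyperplane orthogonal to some unit vector $\nu_p$, of a function of class $C^{1,\alpha}$ for every $\alpha<s$.

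\emph{Step 2.} Fix $p\in\partial E\setminus\Sigma$ and choose $\mathcal R\in SO(n)$ with $\mathcal R\,\nu_p=e_n$. Because ${\rm Per}(\cdot,\cdot,s)$ is invariant under rigid motions, $\mathcal R(\partial E-p)$ is again an $s$-minimal surface, and by Step~1 there is $r>0$ (after shrinking so that the graph fills a cube, using $u(0)=0$ and the smallness of $u$ coming from flatness) such that
$$\mathcal R(\partial E-p)\cap K_r=\{(x',x_n)\,:\,x'\in B^{n-1}_r,\ x_n=u(x')\}$$
with $u\in C^{1,\alpha}(B^{n-1}_r)$ for all $\alpha<s$ and $u(0)=0$. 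Theorem~\ref{main}, applied with $K_r$ in place of $K_R$, gives $u\in C^\infty(B^{n-1}_\rho)$ for every $\rho<r$, together with the bounds~\eqref{E0}. Rotating and translating back, $\partial E$ is a $C^\infty$ hypersurface in a neighborhood of $p$; since $p$ was arbitrary in $\partial E\setminus\Sigma$, the surface $\partial E$ is of class $C^\infty$ on $\R^n\setminus\Sigma$. In particular, for $n\le 7$ (where $\Sigma=\emptyset$) it is globally $C^\infty$, and for $n=8$ it is $C^\infty$ away from countably many isolated points; taking $\epsilon_o:=\min\{\epsilon_n\,:\,1\le n\le 8\}$ yields the first two assertions, while keeping $\epsilon_n$ for the general case completes the argument.

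The only step requiring genuine attention --- everything else being bookkeeping around rigid motions and the locality of Theorem~\ref{main} --- is Step~1: one must check that the uniform estimates of \cite{CV2} really do place $\partial E$, away from $\Sigma$, within the flatness regime of \cite[Theorem~6.1]{CRS} \emph{with a threshold independent of} $s\in(1-\epsilon_n,1)$, so that one obtains an honest $C^{1,\alpha}$ graph over a full cube (and not merely a Lipschitz or viscosity object) and a well-defined graph direction $\nu_p$ at each regular point. This is exactly what the quantitative, $s$-uniform results of \cite{CV2} are designed to supply, after which the corollary follows at once, the substantive new ingredient being Theorem~\ref{main} itself.
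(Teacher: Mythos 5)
Your proposal is correct and follows exactly the route the paper intends: the paper gives no separate proof of this corollary, stating only that it follows by combining the uniform partial regularity of \cite[Theorems 1, 3, 4, 5]{CV2} (singular set of dimension $n-8$, empty for $n\le 7$) with the improvement-of-flatness result \cite[Theorem 6.1]{CRS} to get a local $C^{1,\alpha}$ graph at every regular point, and then invoking Theorem~\ref{main} to bootstrap to $C^\infty$. Your two steps reproduce precisely this argument, including the correct identification of Step~1 (the $s$-uniform flatness threshold) as the only point needing care.
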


Also, Theorem~\ref{main} here combined with Corollary~1
in~\cite{SVc} gives the following regularity result in the plane:

\begin{corollary}
Let~$n=2$. Then, for any~$s\in(0,1)$, any $s$-minimal surface is a smooth embedded curve of
class~$C^{\infty}$.
\end{corollary}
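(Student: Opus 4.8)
The plan is to deduce this statement by combining Theorem~\ref{main} with the two-dimensional result of~\cite{SVc}. First I would recall what~\cite[Corollary~1]{SVc} provides: when~$n=2$, an~$s$-minimal surface cannot have singular points, and consequently --- via the regularity theory of~\cite{CRS} (flat $s$-minimal surfaces are~$C^{1,\alpha}$ for every~$\alpha<s$), applied at a blow-up --- it is, near every one of its points, the graph of a function of class~$C^{1,\alpha}$ for every~$\alpha<s$. Thus the whole issue reduces to upgrading this~$C^{1,\alpha}$ information to~$C^\infty$, which is precisely what Theorem~\ref{main} is designed to do.

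Concretely, I would argue as follows. Let~$\partial E$ be an~$s$-minimal surface in a domain~$\Omega\subseteq\R^2$ and fix~$p\in\partial E\cap\Omega$; after a translation we may take~$p=0$. Invoking~\cite[Corollary~1]{SVc} together with the regularity theory just recalled, after a rotation of coordinates I can find~$R>0$ with~$\overline{K_R}\subset\Omega$ and a function~$u\in C^{1,\alpha}(B^{1}_R)$ for every~$\alpha<s$, satisfying~$u(0)=0$ and
\begin{equation*}
\partial E\cap K_R=\{(x',x_2)\,:\,x'\in B^{1}_R{\mbox{ and }}x_2=u(x')\}.
\end{equation*}
Since~$\partial E$ remains~$s$-minimal in the smaller cube~$K_R$ (minimizers of~${\rm Per}(\,\cdot\,,\,\cdot\,,s)$ restrict to subdomains), all the hypotheses of Theorem~\ref{main} hold with~$n=2$, so that~$u\in C^{\infty}(B^{1}_\rho)$ for every~$\rho\in(0,R)$, with the quantitative bounds~\eqref{E0}. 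In particular~$\partial E$ is a~$C^\infty$ curve in a neighborhood of~$p$; and since~$p$ was an arbitrary point of~$\partial E\cap\Omega$, it follows that~$\partial E$ is of class~$C^\infty$ throughout.

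As for the main difficulty: at this point in the paper there is essentially none. All the genuine work sits in Theorem~\ref{main}, whose proof constitutes the body of this article, and in the planar cone classification of~\cite{SVc}; granted those, the corollary is a formal consequence. The only (trivial) point that needs attention is the localization --- choosing the cube~$K_R$ on which~$\partial E$ is a~$C^{1,\alpha}$ graph so that~$\overline{K_R}$ sits inside the domain of minimality --- which is immediate because~$\partial E$ is already known to be a~$C^1$ curve near each of its points.
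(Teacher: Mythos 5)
Your proposal is correct and follows exactly the route the paper takes: the paper states this corollary as an immediate consequence of Theorem~\ref{main} combined with~\cite[Corollary~1]{SVc}, which is precisely the combination you spell out, with the right localization and graph-over-a-neighborhood argument filled in.
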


In order to prove Theorem~\ref{main} we establish in fact a very
general result about the regularity of integro-differential
equations, which we believe is of independent interest.

For this, we consider a kernel $K=K(x,w):\R^n
\times(\R^n\setminus\{0\})\rightarrow(0,+\infty)$ satisfying some
general structural assumptions. In the following, $\sigma \in
(1,2)$.

First of all, we suppose that $K$ is close to an autonomous kernel
of fractional Laplacian type, namely
\begin{equation}\label{ass silv}
\left\{
\begin{aligned}
&{\mbox{there exist~$a_0,r_0>0$ and $\eta \in (0,a_0/4)$
such that}}\\
&\left| \frac{|w|^{n+\sigma}
K(x,w)}{2-\sigma}-a_0\right|\le\eta
\qquad \forall \,x\in B_{1},\,w\in B_{r_0}\setminus\{0\}.
\end{aligned}
\right.
\end{equation}
Moreover, we assume that\footnote{Observe that we use~$|\cdot|$
both to denote the Euclidean norm of a vector and, for a
multi-index case $\alpha=(\alpha_1,\dots,\alpha_n)\in\N^n$, to
denote $|\alpha|:=\alpha_1+ \dots+\alpha_n$. However, the meaning
of $|\cdot|$  will always be clear from the context.}
\begin{equation}\label{sm 1}
\left\{
\begin{aligned}
&{\mbox{there exist $k \in \N\cup \{0\}$ and $C_k>0$ such that}}\\
&K\in C^{k+1}\big(B_{1}\times (\R^n\setminus\{0\})\big),\\
&\|\partial^\mu_x \partial^\theta_w K(\cdot,
w)\|_{L^\infty(B_{1})} \le \frac{C_k}{|w|^{n+\sigma+|\theta|}}\\
&\qquad
\qquad \forall\,\mu, \theta\in \N^n,\,|\mu|+|\theta|\le k+1,\,w\in \R^n\setminus\{0\}.\\
\end{aligned}
\right.
\end{equation}

Our main result is a ``Schauder regularity theory'' for
solutions\footnote{We adopt the notion of viscosity solution used
in~\cite{CScpam, CS2011, CS2012}.} of an integro-differential
equation. Here and in the sequel we use the notation
\begin{equation}\label{delta}
\delta u(x,w):= u(x+w)+u(x-w)-2u(x).\end{equation}

\begin{theorem}\label{boot}
Let~$\sigma\in (1,2)$, $k\in\N\cup\{0\}$, and $u\in
L^\infty(\R^n)$ be a viscosity solution of the equation
\begin{equation}
\label{eq:main}
\int_{\R^{n}}{K(x,w)\,\delta u(x,w)dw}=f(x,u(x))\qquad
\text{inside $B_1$,}
\end{equation}
with $f\in C^{k+1}(B_1\times\R)$. Assume that
$K:B_{1}\times (\R^n\setminus\{0\})\rightarrow (0,+\infty)$
satisfies assumptions \eqref{ass silv} and \eqref{sm 1} for the
same value of $k$.

Then, if $\eta$ in \eqref{ass silv} is sufficiently small (the
smallness being independent of $k$), we have $u\in
C^{k+\sigma+\alpha}(B_{1/2})$ for any $\alpha<1,$ and
\begin{equation}\label{3bis}
\| u\|_{C^{k+\sigma+\alpha}(B_{1/2})} \le C
\left(1+\|u\|_{L^\infty(\R^n)}+\|f\|_{L^{\infty}(B_{1}\times\R)}\right) ,
\end{equation}
where\footnote{As customary, when~$\sigma+\alpha\in(1,2)$ (resp.
$\sigma+\alpha>2$), by \eqref{3bis} we mean that~$u\in
C^{k+1,\sigma+\alpha-1}(B_{1/2})$ (resp. $u\in
C^{k+2,\sigma+\alpha-1}(B_{1/2})$). (To avoid any issue, we will
always implicitly assume that $\alpha$ is chosen different from
$2-\sigma$, so that $\sigma+\alpha\neq 2$.)} $C>0$ depends only on
$n$, $\sigma$, $k$, $C_k$, and~$\|f\|_{C^{k+1}(B_{1}\times\R)}$.
\end{theorem}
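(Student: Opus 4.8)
The proof is a bootstrap: one starts from a base perturbative Schauder estimate that is already available when $\eta$ is small, and then improves the regularity of $u$ by one derivative at each of $k$ steps, each improvement being obtained by differentiating the equation and reapplying the base estimate to the derivatives of $u$. I would first isolate the \emph{base case} $k=0$: if $\eta$ in \eqref{ass silv} is smaller than a constant depending only on $n$, $\sigma$ and the ellipticity constants in \eqref{ass silv}--\eqref{sm 1}, then every viscosity solution of \eqref{eq:main} lies in $C^{\sigma+\alpha}(B_{3/4})$ for each $\alpha<1$, with $\|u\|_{C^{\sigma+\alpha}(B_{3/4})}\le C\,(1+\|u\|_{L^\infty(\R^n)}+\|f\|_{L^\infty(B_1\times\R)})$. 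This is the perturbative regularity theory for integro-differential operators close to an autonomous kernel of order $\sigma$, in the spirit of \cite{CScpam,CS2011,CS2012}: one proves an interior $C^\gamma$ estimate for some small $\gamma>0$ (a bound of Krylov--Safonov/De~Giorgi type, using only the ellipticity of $K$ and not the smallness of $\eta$), which makes $x\mapsto f(x,u(x))$ Hölder continuous since $f\in C^1$; one then runs the perturbative Schauder iteration --- freezing $K$ at $a_0(w)/|w|^{n+\sigma}$ up to the error $\eta$, comparing $u$ with solutions of the constant-coefficient equation, and using a compactness/Liouville argument --- to upgrade $u$ first to $C^{\sigma+\gamma}$ and then, after finitely many iterations, to $C^{\sigma+\alpha}$. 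By the usual localization and rescaling (freezing at a point and subtracting Taylor polynomials) this also yields the companion estimate I will use repeatedly: if $K$ satisfies \eqref{ass silv}--\eqref{sm 1} with the same small $\eta$ and $\int_{\R^n}K(x,w)\,\delta v(x,w)\,dw=g$ in $B_1$ with $g\in C^\beta(B_1)$, $\beta+\sigma\notin\Z$, then $v\in C^{\beta+\sigma}(B_{1/2})$ with $\|v\|_{C^{\beta+\sigma}(B_{1/2})}\le C(1+\|v\|_{L^\infty(\R^n)}+\|g\|_{C^\beta(B_1)})$. The point --- and the reason the smallness of $\eta$ can be taken independent of $k$ --- is that $\eta$ enters only here; every later gain in regularity uses only the bounds $C_k$ of \eqref{sm 1} and $\|f\|_{C^{k+1}}$.

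For the \emph{inductive step}, assume $u\in C^{j+\sigma+\alpha}(B_{r_j})$ for some $0\le j\le k-1$; I claim $u\in C^{j+1+\sigma+\alpha}$ on a slightly smaller ball. Differentiating \eqref{eq:main} $j$ times (and using $\partial_x^\gamma\,\delta u=\delta(\partial_x^\gamma u)$) shows that, for $|\gamma|=j$, the function $\partial^\gamma u\in C^{\sigma+\alpha}(B_{r_j})$ is a viscosity solution of $\int_{\R^n}K(x,w)\,\delta(\partial^\gamma u)(x,w)\,dw=g_\gamma$ inside $B_{r_j}$, with
\begin{equation*}
g_\gamma \;=\; \partial^\gamma\big[f(x,u(x))\big]\;-\;\sum_{0<\gamma_1\le\gamma}\binom{\gamma}{\gamma_1}\int_{\R^n}\partial_x^{\gamma_1}K(x,w)\,\delta\big(\partial_x^{\gamma-\gamma_1}u\big)(x,w)\,dw .
\end{equation*}
Taking an incremental quotient $D^e_h w(x):=h^{-1}(w(x+he)-w(x))$ of this equation in a direction $e$: since $x\mapsto K(x+he,w)$ still satisfies \eqref{ass silv}--\eqref{sm 1} uniformly for small $h$, the function $D^e_h(\partial^\gamma u)$ is a viscosity solution of a companion equation whose right-hand side is $D^e_h g_\gamma$ minus $\int_{\R^n}h^{-1}\!\big(K(x+he,w)-K(x,w)\big)\,\delta(\partial^\gamma u)(x,w)\,dw$. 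To pass to the limit $h\to0$ and obtain $\partial^\gamma u\in C^{1+\sigma+\alpha}$ it suffices, by the companion Schauder estimate, to bound this right-hand side in $C^\alpha$ uniformly in $h$; since $g_\gamma$ and $\partial^\gamma u$ do not depend on $h$, and difference quotients of $K(\cdot,w)$ in $x$ are uniformly controlled by $C_k|w|^{-n-\sigma}$, this reduces to showing $g_\gamma\in C^{1+\alpha}$ and that $\int_{\R^n}\partial_e K(x,w)\,\delta(\partial^\gamma u)(x,w)\,dw\in C^\alpha$. Both are consequences of the \emph{key commutator estimate}: for $\phi\in C^\tau$ with $\tau>\sigma$ and a multi-index $\mu$ with $|\mu|\le k$, the function $x\mapsto\int_{\R^n}\partial_x^\mu K(x,w)\,\delta\phi(x,w)\,dw$ is of class $C^{\min(\tau-\sigma,\,k+1-|\mu|)}$, with norm bounded by $C_k(\|\phi\|_{C^\tau(\mathrm{loc})}+\|\phi\|_{L^\infty(\R^n)})$. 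Indeed, with $\tau\ge\sigma+\alpha$ one has $\tau-\sigma\ge\alpha$, and the derivatives of $u$ appearing above ($\partial^\gamma u\in C^{\sigma+\alpha}$, and $\partial^{\gamma-\gamma_1}u\in C^{1+\sigma+\alpha}$ when $|\gamma_1|\ge1$, by the inductive hypothesis) provide exactly the exponents needed; that $\partial^\gamma u\in C^{\sigma+\alpha}\subseteq C^{1+\alpha}$ uses $\sigma>1$, and a Leibniz/Fa\`{a} di Bruno expansion of $\partial^\gamma[f(x,u(x))]$ (derivatives of $f$ of order $\le j$, in $C^{\min(k,j+\sigma+\alpha)}$, times derivatives of $u$ of total order $\le j$) shows that it too lies in $C^{1+\alpha}$. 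To prove the key commutator estimate one uses $|\delta\phi(x,w)|\le C\|\phi\|_{C^{\min(2,\tau)}}|w|^{\min(2,\tau)}$ for $|w|\le1$ and $|\delta\phi|\le4\|\phi\|_{L^\infty}$ for $|w|\ge1$ together with the decay \eqref{sm 1}; the Hölder bound in $x$ follows from the standard device of splitting the $w$-integral at $|w|=|x_1-x_2|$, and it is precisely here --- in the convergence of $\int_{|w|<1}|w|^{\min(2,\tau)-\sigma-n}\,dw$ and in the factor $|w|^{1-\sigma}$ produced by integrating over $|x_1-x_2|<|w|<1$ --- that the hypothesis $\sigma>1$ is essential.

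Iterating the inductive step $k$ times on a decreasing sequence of radii staying above $1/2$, and combining with the base case, gives $u\in C^{k+\sigma+\alpha}(B_{1/2})$; and since each step is linear in the norm of $u$ in the previously obtained space, with a multiplicative constant depending only on $n,\sigma,k,C_k,\|f\|_{C^{k+1}(B_1\times\R)}$, composing the finitely many steps with the base estimate produces \eqref{3bis}. The main obstacle I expect is twofold: making the differentiation of the equation rigorous within the viscosity framework --- i.e.\ checking carefully that $D^e_h(\partial^\gamma u)$ is a genuine viscosity solution of the displayed companion equation, with $D^e_h g_\gamma$ and the extra integral as bona fide, uniformly bounded right-hand sides, so that one may let $h\to0$ --- and establishing the key commutator estimate uniformly, which is exactly the point where $\sigma>1$ and the structural decay \eqref{sm 1} are exploited.
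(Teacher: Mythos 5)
Your outline (a base Schauder estimate for $k=0$, then bootstrap by differentiating the equation $k$ times, with a decomposition into ``local'' and ``far'' contributions controlled by $|\delta\phi|\lesssim\min(\|\phi\|_{C^2}|w|^2,\|\phi\|_\infty)$ and the decay \eqref{sm 1}) agrees with the paper's at a high level, and your inductive step is close in spirit to the paper's induction argument. But there is a genuine gap, and it is exactly the one you yourself flag as the ``main obstacle'': how to make the differentiation of the equation rigorous when $u$ is only a viscosity solution. The paper observes explicitly that this is not a routine point -- because the right-hand side $f(x,u(x))$ depends on $u$, equation \eqref{eq:main} has no uniqueness, so one cannot prove a priori estimates for smooth solutions and deduce the theorem by approximating $u$ with smooth solutions of the same equation. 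The paper's resolution is a specific approximation scheme that is absent from your proposal: a modified kernel $K_\varepsilon$ (see \eqref{19bis}) equal to $(2-\sigma)/|w|^{n+\sigma}$ on $B_{\varepsilon/2}$, and the solution $u_\varepsilon$ of the \emph{linear} Dirichlet problem \eqref{20bis} whose right-hand side $f_\varepsilon=f(\cdot,u(\cdot))*\hat\eta_\varepsilon$ is built from the given $u$ and hence fixed. Linearity restores existence, uniqueness and stability; the fractional-Laplacian form of $K_\varepsilon$ near the origin lets one rewrite the equation as $(-\Delta)^{\sigma/2}u_\varepsilon = d_{n,\sigma}\,g_\varepsilon$ (see \eqref{fund.eq}) and prove $u_\varepsilon\in C^\infty$ with $\varepsilon$-dependent bounds from classical fractional-Laplacian theory; only then may one differentiate the equation pointwise, extract the $\varepsilon$-uniform $C^{2,\beta}$ bound \eqref{29CC} via the $A_1,A_2,A_3$ decomposition and \cite[Theorem 61]{CS2011}, close the loop using the scaled-norm covering Lemma~\ref{Co2} together with interpolation, and finally let $\varepsilon\to0$ via the uniform convergence \eqref{Uu}. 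Your incremental-quotient argument does not substitute for this device: passing to the limit $h\to0$ to produce a genuine higher derivative of $u$, and then feeding that derivative back into a Leibniz expansion of $g_\gamma$, is precisely what has to be justified.

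A secondary but real gap is the base case itself. You describe it as an off-the-shelf perturbative Schauder theory producing $C^{\sigma+\alpha}$ from a $C^\alpha$ right-hand side. The only external regularity input the paper actually uses is \cite[Theorem 61]{CS2011}, an $L^\infty$-to-$C^\sigma$ estimate (yielding $C^{1,\beta}$ for $\beta<\sigma-1$), not a $C^\alpha$-to-$C^{\sigma+\alpha}$ estimate; the authors say explicitly that ``the core in the proof of Theorem \ref{boot} is the step $k=0$,'' and indeed even the passage from $C^{1,\beta}$ to $C^{2,\beta}$ already requires the full $K_\varepsilon$/$u_\varepsilon$ machinery. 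A compactness/Liouville route of the kind you sketch is not implausible, but it is substantial new work rather than a citation -- in particular the reference autonomous kernel $a_0(w)/|w|^{n+\sigma}$ is not the fractional Laplacian, and any compactness step would have to confront the same non-uniqueness issue, so the burden you are deferring to the ``base case'' is in fact most of the theorem.
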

Let us notice that, since the right hand side in \eqref{eq:main} depends
on $u$, there is no uniqueness for such an equation.
In particular it is not enough for us to prove a-priori estimates for smooth
solutions and then argue by approximation, since we do not know if our solution
can be obtained as a limit of smooth solution.

We also note that, if in \eqref{sm 1} one replaces the $C^{k+1}$-regularity of $K$
with the $C^{k,\beta}$-assumption
\begin{equation}\label{new condition}
\|\partial^\mu_x
\partial^\theta_w K(\cdot, w)\|_{C^{0,\beta}(B_{1})} \le
\frac{C_k}{|w|^{n+\sigma+|\theta|}},
\end{equation}
for all $|\mu|+|\theta|\le k$, then we obtain the following:

\begin{theorem}\label{boot2}
Let~$\sigma\in (1,2)$, $k\in\N\cup\{0\}$, and $u\in
L^\infty(\R^n)$ be a viscosity solution of equation \eqref{eq:main}
with $f\in C^{k, \beta}(B_1\times\R)$. Assume that
$K:B_{1}\times (\R^n\setminus\{0\})\rightarrow (0,+\infty)$
satisfies assumptions \eqref{ass silv} and \eqref{new condition}
for the same value of $k$.

Then, if $\eta$ in \eqref{ass silv} is sufficiently small (the
smallness being independent of $k$), we have $u\in
C^{k+\sigma+\alpha}(B_{1/2})$ for any $\alpha<\beta,$ and
\begin{equation*}
\| u\|_{C^{k+\sigma+\alpha}(B_{1/2})} \le C
\left(1+\|u\|_{L^\infty(\R^n)}+\|f\|_{L^{\infty}(B_{1}\times\R)}\right) ,
\end{equation*}
where $C>0$ depends only on $n$, $\sigma$, $k$, $C_k$,
and~$\|f\|_{C^{k,\beta}(B_{1}\times\R)}$.
\end{theorem}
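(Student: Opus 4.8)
The plan is to run the proof of Theorem~\ref{boot} essentially verbatim, the only change being that the H\"older exponent gained at each step is now $\alpha<\beta$ instead of $\alpha<1$; this reflects the fact that the bound on $K$ (and on $f$) in the $x$-variable is now $C^{0,\beta}$ rather than $C^1$. Indeed, the base case and the perturbative Schauder estimate rely only on \eqref{ass silv} (and on the smallness of $\eta$, which is fixed once and for all, independently of $k$), so they are unaffected; it is only in the differentiation/commutator step that \eqref{new condition} enters, and there the exponent $\beta$ propagates.

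\emph{Base case $k=0$.} Assumption \eqref{ass silv} makes the operator uniformly elliptic, so the interior H\"older estimate for integro-differential operators of Krylov--Safonov type gives $u\in C^{\gamma}_{\rm loc}(B_1)$ for some $\gamma=\gamma(n,\sigma)>0$, with norm controlled by $\|u\|_{L^\infty(\R^n)}+\|f\|_{L^\infty(B_1\times\R)}$. Then $x\mapsto f(x,u(x))\in C^{0,\gamma\beta}$, and freezing the kernel and comparing with the autonomous operator $u\mapsto\int a_0(w)|w|^{-n-\sigma}\delta u(x,w)\,dw$ (which is where $\eta$ small is used) upgrades $u$ to $C^{\sigma+\gamma\beta}(B_{1/2})$; since $\sigma>1$, $u$ is now Lipschitz, hence $f(x,u(x))\in C^{0,\beta}$, and one further application of the Schauder estimate yields $u\in C^{\sigma+\alpha}(B_{1/2})$ for every $\alpha<\beta$, with the claimed bound. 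This is precisely the $k=0$ case of the theorem.

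\emph{Inductive step.} Suppose the statement holds for $k-1$, and let $K,f$ satisfy the hypotheses for $k$. Fix a unit vector $e$. Working with incremental quotients $\tau_h u:=h^{-1}\big(u(\cdot+he)-u(\cdot)\big)$ — necessary because, as remarked after Theorem~\ref{boot}, the equation has no uniqueness and one may not simply differentiate a classical solution — and passing to the limit $h\to0$ once the estimates are uniform in $h$, one finds that $v:=\partial_e u$ solves, inside a slightly smaller ball,
\begin{equation*}
\begin{aligned}
\int_{\R^n}K(x,w)\,\delta v(x,w)\,dw={}&\;\partial_{x_e}f(x,u(x))+\partial_u f(x,u(x))\,v(x)\\
&-\int_{\R^n}\partial_{x_e}K(x,w)\,\delta u(x,w)\,dw.
\end{aligned}
\end{equation*}
Here $K$ still satisfies \eqref{ass silv} with the same $a_0,\eta$, and it satisfies \eqref{new condition} with $k$ replaced by $k-1$. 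Using that $u\in C^{\sigma+\alpha}$ from the base case, together with composition estimates for $f\in C^{k,\beta}$ and the bound that $\int\partial_x^\mu K(x,w)\,\delta(\partial_x^\nu u)(x,w)\,dw$ defines a H\"older function whose regularity matches that currently known for $u$ (gaining only $C^{0,\beta}$ in the $x$-variable from \eqref{new condition}), one bootstraps: applying the lower-order cases of the theorem — equivalently, iterating the Schauder estimate of the base case — raises the regularity of $v$, hence of $u$, by (essentially) $\sigma$ at each round, until the regularity $C^{k,\beta}$ of the data is saturated. This gives $v\in C^{(k-1)+\sigma+\alpha}(B_{1/2})$ for every $\alpha<\beta$; since $e$ is arbitrary, $u\in C^{k+\sigma+\alpha}(B_{1/2})$, and the quantitative bound \eqref{3bis}-type estimate follows by the usual iteration on concentric balls.

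\emph{Main obstacle.} As in Theorem~\ref{boot}, the heart of the matter is the commutator term $\int\partial_x^\mu K(x,w)\,\delta(\partial_x^\nu u)(x,w)\,dw$: one must show it lies in the correct H\"older class with a bound that does not degrade along the induction, and here the available gain in the $x$-variable is only $C^{0,\beta}$ rather than $C^{0,1^-}$, which is exactly what pins the final regularity at $C^{k+\sigma+\alpha}$ with $\alpha<\beta$. A second, more routine, technical point is to realize all the differentiations through incremental quotients, keeping the error terms controlled uniformly in $h$ so that the passage $h\to0$ is legitimate for a mere viscosity solution, and to localize the equation for $v$ correctly, since $\partial_e u$ need not be globally bounded even though $u$ is.
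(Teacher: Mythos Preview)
Your proposal is correct and follows essentially the same approach as the paper: the paper itself gives only a one-sentence sketch, stating that the proof of Theorem~\ref{boot2} is that of Theorem~\ref{boot} with incremental quotients replacing differentiation. You correctly identify this modification, the role of the commutator term, and the reason the $C^{0,\beta}$ hypothesis on $K$ caps the H\"older gain at $\alpha<\beta$.
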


The proof of Theorem \ref{boot2} is essentially the same as the one of Theorem \ref{boot},
the only difference being that instead of differentiating
the equations (see for instance the argument in Section
\ref{section:uniforml}) one should use incremental quotients.
Although this does not introduce any major additional
difficulties, it makes the proofs longer and more tedious. Hence,
since the proof of Theorem \ref{boot} already contains all the
main ideas to prove also
Theorem \ref{boot2}, we will show the details of the proof only for Theorem \ref{boot}.

The paper is organized as follows: in the next section we prove
Theorem \ref{boot}, and then in Section \ref{section:main} we
write the fractional minimal surface equation in a suitable form
so that we can apply Theorems~\ref{boot} and~\ref{boot2}
to prove Theorem~\ref{main}.\\

\textit{Acknowledgements:} We wish to thank Guido De Philippis and Francesco Maggi for stimulating
our interest in this problem. We also thank Guido De Philippis for a
careful reading of a preliminary version of our manuscript,
and Nicola Fusco for kindly pointing out to us
a computational inaccuracy.
BB was partially supported by Spanish Grant MTM2010-18128. AF was partially supported by NSF Grant DMS-0969962.
EV was partially supported by ERC Grant 277749 and FIRB Grant A\&B.

\section{Proof of Theorem \ref{boot}}

The core in the proof of Theorem \ref{boot} is the step $k=0$,
which will be proved in several steps.

\subsection{Toolbox}   

We collect here some preliminary observations on scaled H\"older
norms, covering arguments, and differentiation of integrals that
will play an important role in the proof of Theorem~\ref{boot}.
This material is mainly technical, and the expert reader may go
directly to Section~\ref{TTT} at page~\pageref{TTT}.

\subsubsection{Scaled H\"older norms and coverings}

Given~$m\in\N$, $\alpha\in(0,1)$, $x\in\R^n$, and~$r>0$, we define
the $C^{m,\alpha}$-norm of a function~$u$ in~$B_r(x)$ as
$$ \|u\|_{C^{m,\alpha} (B_r(x))}:=
\sum_{|\gamma|\le m} \|D^\gamma u\|_{L^\infty (B_r(x))}
+\sum_{|\gamma|=m}\sup_{y\ne z\in B_r(x)}\frac{|D^\gamma u(y)-
D^\gamma u(z)|}{|y-z|^\alpha}.$$ For our purposes it is also
convenient to look at the following classical rescaled version of
the norm:
\begin{eqnarray*}
\|u\|^*_{C^{m,\alpha} (B_r(x))}&:=&\sum_{j=0}^m \sum_{|\gamma|=j}
r^{j} \|D^\gamma u\|_{L^\infty (B_r(x))}
\\&&+\sum_{|\gamma|=m} r^{m+\alpha}
\sup_{y\ne z\in B_r(x)}\frac{|D^\gamma u(y)- D^\gamma
u(z)|}{|y-z|^\alpha}.\end{eqnarray*} This scaled norm behaves
nicely under covering, as the next observation points out:

\begin{lemma}\label{co} Let~$m\in\N$, $\alpha\in(0,1)$, $\rho>0$, and~$x\in\R^n$. Fix $\lambda \in (0,1)$, and suppose that~$B_{\rho}(x)$ is covered
by finitely many balls~$\{B_{\lambda\rho/2}(x_k)\}_{k=1}^N$. Then,
there exists~$C_o>0$, depending only on $\lambda$ and~$m$, such that
$$ \|u\|^*_{C^{m,\alpha} (B_\rho (x))}\le C_o
\sum_{k=1}^N  \|u\|^*_{C^{m,\alpha} (B_{\lambda\rho}(x_k))}.$$
\end{lemma}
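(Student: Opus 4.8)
The plan is to prove the covering bound for the scaled norm $\|\cdot\|^*$ term by term, comparing each summand of $\|u\|^*_{C^{m,\alpha}(B_\rho(x))}$ with the corresponding summands of $\|u\|^*_{C^{m,\alpha}(B_{\rho/10}(x_k))}$. The essential point is that the radius appearing in all the balls $B_{\rho/10}(x_k)$ is comparable (indeed equal) to $\rho$, so the powers of $r$ in the two scaled norms match up and no loss from the scaling occurs; the only work is to control the \emph{sup}-type quantities over the big ball by sups over the small balls, which is where the geometry of the covering enters.

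First I would treat the lower-order terms. For each multi-index $\gamma$ with $|\gamma|=j\le m$ and each point $y\in B_\rho(x)$, the covering hypothesis gives some $k$ with $y\in B_{\rho/100}(x_k)\subset B_{\rho/10}(x_k)$, hence
$$
\rho^{j}\,|D^\gamma u(y)| \le \rho^{j}\,\|D^\gamma u\|_{L^\infty(B_{\rho/10}(x_k))} \le \sum_{k=1}^N \rho^{j}\,\|D^\gamma u\|_{L^\infty(B_{\rho/10}(x_k))}.
$$
Taking the supremum over $y\in B_\rho(x)$ and summing over $j\le m$ and over $|\gamma|=j$ bounds the lower-order part of $\|u\|^*_{C^{m,\alpha}(B_\rho(x))}$ by $\sum_k$ of the lower-order part of $\|u\|^*_{C^{m,\alpha}(B_{\rho/10}(x_k))}$, with constant $1$.

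The main obstacle — and the only genuinely nontrivial step — is the H\"older seminorm term $\rho^{m+\alpha}\sup_{y\ne z\in B_\rho(x)}|D^\gamma u(y)-D^\gamma u(z)|/|y-z|^\alpha$ for $|\gamma|=m$, because a pair $y,z\in B_\rho(x)$ need not lie in a common small ball $B_{\rho/10}(x_k)$. I would split into two cases according to the distance $|y-z|$. If $|y-z| < \rho/200$, then picking $k$ with $y\in B_{\rho/100}(x_k)$ forces $z\in B_{\rho/100}(x_k)+B_{\rho/200}\subset B_{\rho/10}(x_k)$, so the difference quotient is controlled by the seminorm on $B_{\rho/10}(x_k)$ and, after multiplying by $\rho^{m+\alpha}$, by the corresponding term of $\|u\|^*_{C^{m,\alpha}(B_{\rho/10}(x_k))}$. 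If instead $|y-z|\ge \rho/200$, then we use the crude bound
$$
\frac{|D^\gamma u(y)-D^\gamma u(z)|}{|y-z|^\alpha} \le \frac{2\,\|D^\gamma u\|_{L^\infty(B_\rho(x))}}{(\rho/200)^\alpha} = 2\cdot 200^\alpha\, \rho^{-\alpha}\,\|D^\gamma u\|_{L^\infty(B_\rho(x))},
$$
and $\|D^\gamma u\|_{L^\infty(B_\rho(x))}$ is in turn bounded by $\sum_k \|D^\gamma u\|_{L^\infty(B_{\rho/10}(x_k))}$ as in the lower-order step; multiplying by $\rho^{m+\alpha}$ converts this into $2\cdot 200^\alpha\, \rho^{m}\sum_k\|D^\gamma u\|_{L^\infty(B_{\rho/10}(x_k))}$, which is absorbed into $\sum_k \|u\|^*_{C^{m,\alpha}(B_{\rho/10}(x_k))}$.

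Combining the two cases and summing over the finitely many $\gamma$ with $|\gamma|=m$ (there are at most a dimensional constant times $1$ of them, absorbed into $C_o$) yields the claimed inequality with $C_o$ depending only on $m$ (the factor $200^\alpha\le 200$ and the number of multi-indices being harmless). I would remark that the constant is genuinely independent of $\rho$, $x$, $N$, and the particular covering, which is exactly the feature that makes $\|\cdot\|^*$ convenient in the later iteration arguments; the number $N$ of balls does not enter because at each stage only \emph{one} small ball is used to bound a given point or pair.
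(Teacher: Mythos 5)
Your argument is essentially the paper's proof: the same two-case split on $|y-z|$ (a small-gap case where $y,z$ land in a common $B_{\rho/10}(x_k)$, and a large-gap case where the H\"older quotient is reduced to an $L^\infty$ bound), with only a different cutoff threshold ($\rho/200$ vs.\ the paper's $\rho/100$), which is immaterial. One small imprecision worth correcting: the radius of the small balls is $\rho/10$, not $\rho$, so the weight in $\|u\|^*_{C^{m,\alpha}(B_{\rho/10}(x_k))}$ is $(\rho/10)^j$, not $\rho^j$; your lower-order estimate therefore carries a factor $10^j\le 10^m$, not constant $1$ as written. Since $C_o$ is allowed to depend on $m$, this does not affect the conclusion, but the phrase ``indeed equal'' and ``with constant $1$'' should be dropped.
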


\begin{proof} We first observe that, if~$j\in \{0,\dots,m\}$ and~$|\gamma|=j$,
\begin{eqnarray*}
\rho^{j} \|D^\gamma u\|_{L^\infty (B_\rho(x))} &\le& \lambda^{-j}
(\lambda\rho)^{j} \max_{k=1,\dots,N}  \|D^\gamma u\|_{L^\infty
(B_{\lambda\rho} (x_k))}
\\ &\le& \lambda^{-m} \sum_{k=1}^N
(\lambda\rho)^{j} \|D^\gamma u\|_{L^\infty (B_{\lambda\rho} (x_k))}
\\ &\le& \lambda^{-m} \sum_{k=1}^N
\|u\|^*_{C^{m,\alpha} (B_{\lambda\rho} (x_k))}.
\end{eqnarray*}
Now, let~$|\gamma|=m$: we claim that
\begin{equation*}
\rho^{m+\alpha} \sup_{y\ne z\in B_{\rho}(x)}\frac{|D^\gamma u(y)-
D^\gamma u(z)|}{|y-z|^\alpha} \le2
\lambda^{-(m+\alpha)}\sum_{k=1}^N
\|u\|^*_{C^{m,\alpha} (B_{\lambda\rho}(x_k))}.
\end{equation*}
To check this, we take~$y, z\in B_{\rho}(x)$ with $y \neq z$ and
we distinguish two cases. If~$|y-z|< \lambda\rho/2$ we
choose~$k_o\in\{1,\dots,N\}$ such that~$y\in
B_{\lambda\rho/2}(x_{k_o})$. Then~$|z-x_{k_o}|\le
|z-y|+|y-x_{k_o}|<\lambda\rho$, which implies~$y,z\in
B_{\lambda\rho}(x_{k_o})$, therefore
\begin{eqnarray*}
\rho^{m+\alpha} \frac{|D^\gamma u(y)-D^\gamma u(z)|}{|y-z|^\alpha}
&\le& \rho^{m+\alpha} \sup_{\tilde y\ne \tilde z\in
B_{\lambda\rho}(x_{k_o})} \frac{|D^\gamma u(\tilde y)-D^\gamma
u(\tilde z)|}{
|\tilde y-\tilde z|^\alpha} \\
&\le&\lambda^{-(m+\alpha)}\| u\|^*_{ C^{m,\alpha} (B_{\lambda\rho}(x_{k_o}))}.
\end{eqnarray*}
Conversely, if~$|y-z|\geq\lambda\rho/2$, recalling that $\alpha \in (0,1)$
we have
\begin{eqnarray*}
\rho^{m+\alpha} \frac{|D^\gamma u(y)- D^\gamma
u(z)|}{|y-z|^\alpha} &\le& 2 \lambda^{-\alpha} \rho^{m}
{\|D^\gamma u\|_{L^\infty(B_\rho(x))}}\\
&\leq& 2 \lambda^{-\alpha} \rho^{m}\sum_{k=1}^{N}{\|D^{\gamma}u\|_{L^{\infty}(B_{\lambda\rho}(x_{k}))}}\\
&\leq&2
\lambda^{-(m+\alpha)}\sum_{k=1}^N\|u\|^{*}_{C^{m,\alpha}(B_{\lambda\rho}(x_{k}))}.
\end{eqnarray*}
This proves the claim and concludes the proof.
\end{proof}

Scaled norms behave also nicely in order to go from local to
global bounds, as the next result shows:

\begin{lemma}\label{Co2}
Let~$m\in\N$, $\alpha\in(0,1)$, and~$u\in C^{m,\alpha}(B_1)$.
Suppose that there exist $\mu \in (0,1/2)$ and $\nu\in (\mu,1]$ for which the following holds:
for any~$\epsilon>0$ there
exists~$\Lambda_\epsilon>0$ such that, for any~$x\in B_1$ and
any~$r\in (0,1-|x|]$, we have
\begin{equation}\label{X0}
\|u\|^*_{C^{m,\alpha}(B_{\mu r}(x))} \leq \Lambda_\epsilon +\epsilon
\|u\|^*_{C^{m,\alpha}(B_{\nu r}(x))}.
\end{equation}
Then there exist constants~$\epsilon_o$, $C>0$, depending only
on~$n$, $m$, $\mu$, $\nu$, and $\alpha$, such that
$$ \|u\|_{C^{m,\alpha}(B_{\mu})}\le  C\Lambda_{\epsilon_o}.$$
\end{lemma}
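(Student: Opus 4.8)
The plan is to convert the hypothesis~\eqref{X0}---which compares scaled norms on two concentric balls of comparable radii---into a single self-improving inequality for a global \emph{distance-weighted} norm on $B_1$, and then absorb. For $x\in B_1$ write $d_x:=1-|x|$ for the distance of $x$ to $\partial B_1$, and set
\begin{equation*}
\Theta:=\sum_{j=0}^{m}\,\sup_{x\in B_1}\,d_x^{\,j}\max_{|\gamma|=j}|D^\gamma u(x)|\;+\;\sup_{x\neq y\in B_1}\,\min(d_x,d_y)^{m+\alpha}\max_{|\gamma|=m}\frac{|D^\gamma u(x)-D^\gamma u(y)|}{|x-y|^{\alpha}}.
\end{equation*}
Observe first that $\Theta<\infty$: this is immediate if $u\in C^{m,\alpha}(\overline{B_1})$, and in general one runs the argument below verbatim with $B_1$ replaced by $B_{1-\tau}$ and $d_x$ by the distance to $\partial B_{1-\tau}$ (finiteness is then trivial, and all the constants below come out independent of $\tau$; since only the conclusion on $B_{1/8}$ is sought, a single fixed small $\tau$ suffices).

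For the key step, fix $x\in B_1$ and apply~\eqref{X0} with $r:=d_x$, which is allowed since $r=1-|x|$ is the largest admissible value. The enlarged ball $B_{5d_x/6}(x)$ is contained in $B_1$, and every $y$ in it has $d_y\ge d_x-5d_x/6=d_x/6$; inserting this into each term of $\|u\|^*_{C^{m,\alpha}(B_{5d_x/6}(x))}$, the powers of $d_x$ cancel against powers of $d_x/6$ and we get $\|u\|^*_{C^{m,\alpha}(B_{5d_x/6}(x))}\le C_1\Theta$ with $C_1=C_1(n,m,\alpha)$ (here Lemma~\ref{co} is \emph{not} needed: the enlarged ball is already of size comparable to $d_x$ and stays a fixed fraction of $d_x$ away from $\partial B_1$). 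On the smaller ball, $\|u\|^*_{C^{m,\alpha}(B_{d_x/8}(x))}$ dominates $(d_x/8)^{\,j}|D^\gamma u(x)|$ for $|\gamma|=j\le m$, as well as $(d_x/8)^{m+\alpha}|D^\gamma u(x)-D^\gamma u(y)|/|x-y|^{\alpha}$ for $|\gamma|=m$ and $y\in B_{d_x/8}(x)$. Combining these two facts with~\eqref{X0} yields, for every $x\in B_1$,
\begin{equation*}
d_x^{\,j}|D^\gamma u(x)|\le 8^{m}\big(\Lambda_\epsilon+\epsilon\,C_1\Theta\big)\qquad(|\gamma|=j\le m),
\end{equation*}
together with the same bound, up to a dimensional factor, for $\min(d_x,d_y)^{m+\alpha}|D^\gamma u(x)-D^\gamma u(y)|/|x-y|^{\alpha}$ in the range $|x-y|<d_x/8$ (assuming, say, $d_x\le d_y$).

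It remains to handle Hölder pairs with $|x-y|\ge d_x/8$ (again $d_x\le d_y$): here~\eqref{X0} is not used at all; one simply bounds the difference quotient by $\big(|D^\gamma u(x)|+|D^\gamma u(y)|\big)(8/d_x)^{\alpha}$ and plugs in the pointwise estimate just obtained, using $d_y\ge d_x$. Taking suprema, all the pieces assemble into $\Theta\le C_2\big(\Lambda_\epsilon+\epsilon\,C_1\Theta\big)$ with $C_2=C_2(n,m,\alpha)$; choosing $\epsilon=\epsilon_o$ with $C_1C_2\epsilon_o\le\tfrac12$ and absorbing the $\Theta$-term (this is where $\Theta<\infty$ enters) gives $\Theta\le 2C_2\Lambda_{\epsilon_o}$. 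Since on $B_{1/8}$ one has $d_x\ge 7/8$ and $\min(d_x,d_y)\ge 7/8$, every term of $\|u\|_{C^{m,\alpha}(B_{1/8})}$ is at most a dimensional multiple of the corresponding weighted quantity in $\Theta$, and the claimed bound $\|u\|_{C^{m,\alpha}(B_{1/8})}\le C\Lambda_{\epsilon_o}$ follows with $C=C(n,m,\alpha)$.

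I do not expect any conceptual obstacle beyond this standard absorption mechanism; the delicate points are purely bookkeeping: keeping the radii $1/8$, $5/6$, $1/6$ straight so that the weights genuinely cancel, and---the step easiest to overlook---treating the ``far'' Hölder pairs separately from the ``near'' ones, since the scaled norm on $B_{d_x/8}(x)$ only controls Hölder differences \emph{within} that tiny ball and must be supplemented by the pointwise sup bounds for pairs at distance $\gtrsim d_x$.
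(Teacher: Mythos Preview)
Your argument is correct, and it follows a genuinely different route from the paper's own proof. The paper defines
\[
Q:=\sup_{x\in B_1,\;r\in(0,1-|x|]}\|u\|^*_{C^{m,\alpha}(B_{r/8}(x))},
\]
covers $B_{r/8}(x)$ by $N$ balls of radius $r/800$, applies~\eqref{X0} on each of these (at scale $r/10$), and then invokes the covering Lemma~\ref{co} to obtain $Q\le C_oN\Lambda_\epsilon+\epsilon C_oNQ$, from which absorption gives the result. You instead work with the classical distance-weighted global norm $\Theta$ (as in interior Schauder theory, cf.\ Gilbarg--Trudinger), apply~\eqref{X0} only once per point, with the maximal radius $r=d_x$, and bound the right-hand side directly by $\Theta$ via the elementary inequality $d_y\ge d_x/6$ on $B_{5d_x/6}(x)$. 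Your approach bypasses Lemma~\ref{co} entirely; the price is that you must treat ``near'' and ``far'' H\"older pairs separately, whereas in the paper this splitting is effectively hidden inside the proof of Lemma~\ref{co}. Both arguments are standard and of comparable length; yours is perhaps more self-contained in that it does not appeal to an auxiliary covering lemma. One minor remark: under the paper's hypothesis $u\in C^{m,\alpha}(B_1)$ (interpreted, as the paper does, with finite norm on $B_1$), your $\Theta$ is automatically finite since $d_x\le1$, so the detour through $B_{1-\tau}$ is not actually needed.
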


\begin{proof} First of all we observe that
\begin{equation*}
\|u\|^*_{C^{m,\alpha}(B_{\mu r}(x))}\le
\|u\|_{C^{m,\alpha}(B_{\mu r}(x))} \le \|u\|^*_{C^{m,\alpha}(B_1)}
\end{equation*}
because~$r\in(0,1)$, which implies that
$$
Q:=\sup_{{x\in B_1}\atop{r\in (0,1-|x|]}}
\|u\|^*_{C^{m,\alpha}(B_{\mu r}(x))}<+\infty.
$$
We now use a covering argument: pick $\lambda \in (0,1/2]$
to be chosen later, and fixed any~$x\in B_1$ and~$r\in
(0,1-|x|]$ we cover $B_{\mu r}(x)$ with finitely many balls
$\{B_{\lambda\mu r/2}(x_k)\}_{k=1}^N$, with $x_k \in B_{\mu r}(x)$, for
some~$N$ depending only on $\lambda$ and the dimension $n$. We now observe that, since $\mu<1/2$,
\begin{equation}
\label{eq:xk}
 |x_k|+ {r}/2\le |x_k-x|+|x|+{r}/2\le \mu{r}+
|x|+{r}/2< r+|x|\le 1.
\end{equation}
Hence, since $\lambda \leq 1/2$, we can use~\eqref{X0}
(with~$x=x_k$ and $r$ scaled to $\lambda r$) to obtain
\begin{eqnarray*}
\|u\|^*_{C^{m,\alpha}(B_{\lambda \mu r}(x_k))}
\leq \Lambda_\epsilon +\epsilon
\|u\|^*_{C^{m,\alpha}(B_{\lambda \nu r}(x_k))}.
\end{eqnarray*}
Then, using
Lemma~\ref{co} with~$\rho:=\mu r$ and $\lambda=\mu/(2\nu)$, 
and recalling \eqref{eq:xk} and the definition of $Q$, we get
\begin{eqnarray*}
\|u\|^*_{C^{m,\alpha}(B_{\mu r}(x))} &\leq & C_o
\sum_{k=1}^N \|u\|_{C^{m,\alpha}(B_{\lambda \mu r}(x_k))}^*\\
&\leq & C_o N \Lambda_\epsilon+C_o \epsilon \sum_{k=1}^N
\|u\|^*_{C^{m,\alpha}(B_{\lambda \nu r}(x_k))}\\
&=&C_o N \Lambda_\epsilon+C_o \epsilon \sum_{k=1}^N
\|u\|^*_{C^{m,\alpha}(B_{\mu r/2}(x_k))}\\
&\leq & C_oN\Lambda_\epsilon + \epsilon C_oN Q.
\end{eqnarray*}
Using the definition of $Q$ again, this implies
$$ Q \leq C_oN\Lambda_\epsilon + \epsilon C_oN Q, $$
so that, by choosing~$\epsilon_o:=1/(2C_oN)$,
$$ Q \leq 2C_o N \Lambda_{\epsilon_o}.$$
Thus we have proved that
$$
\|u\|_{C^{m,\alpha}(B_{\mu r}(x))}^* \le  2C_o N
\Lambda_{\epsilon_o} \qquad \forall\, x\in B_1, \,r\in (0,1-|x|],
$$
and the desired result follows setting~$x=0$ and~$r=1$.
\end{proof}

\subsubsection{Differentiating integral functions}

In the proof of Theorem~\ref{boot} we will need to differentiate,
under the integral sign, smooth functions that are either
supported near the origin or far from it. This purpose will be
accomplished in Lemmata~\ref{D} and~\ref{E}, after some technical
bounds that are needed to use the Dominated Convergence Theorem.

Recall the notation in~\eqref{delta}.

\begin{lemma}\label{D1}
Let~$r>r'>0$, $v\in C^3(B_r)$, $x\in B_{r'}$, $h\in\R$ with~$|h|<
(r-r')/2$. Then, for any~$w\in\R^n$ with~$|w|< (r-r')/2$, we have
$$ |\delta v(x+he_1,w)-\delta v(x,w)|\le |h|\, |w|^2
\|v\|_{C^3(B_r)} .$$
\end{lemma}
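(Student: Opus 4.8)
The quantity to be estimated is a second-order increment $\delta v$ (in the sense of~\eqref{delta}) evaluated at the two nearby base points $x+he_1$ and $x$, so the natural strategy is to commute the two increments. Set
$$g(y):=v(y+he_1)-v(y).$$
Expanding $\delta v$ according to~\eqref{delta} and regrouping terms, one gets at once
$$\delta v(x+he_1,w)-\delta v(x,w)=g(x+w)+g(x-w)-2g(x)=\delta g(x,w),$$
so the task reduces to estimating a \emph{single} second-order increment $\delta g(x,w)$. The point of the reduction is that $g$ is a first difference of $v$ in the $e_1$-direction: by the fundamental theorem of calculus, $g(y)=h\int_0^1\partial_1 v(y+\tau h e_1)\,d\tau$, hence, differentiating under the integral sign, $\partial_i\partial_j g(y)=h\int_0^1\partial_1\partial_i\partial_j v(y+\tau h e_1)\,d\tau$; in particular the Hessian of $g$ is of size $O\big(|h|\,\|v\|_{C^3}\big)$ on any ball where these segments stay inside $B_r$.

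The estimate then follows by combining two elementary one-variable facts. First, a second-order Taylor expansion of $g$ with integral remainder along the segments $t\mapsto x\pm tw$ gives the classical bound
$$|\delta g(x,w)|\le |w|^2\,\|D^2 g\|_{L^\infty(B_{|w|}(x))},$$
valid provided $g\in C^2\big(B_{|w|}(x)\big)$. Second, the identity for $\partial_i\partial_j g$ recorded above, together with the assumption $v\in C^3(B_r)$, yields
$$\|D^2 g\|_{L^\infty(B_{|w|}(x))}\le |h|\,\|D^3 v\|_{L^\infty(B_r)}\le |h|\,\|v\|_{C^3(B_r)},$$
provided the relevant $e_1$-segments of length $|h|$ remain in $B_r$. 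Plugging the second estimate into the first produces exactly $|\delta v(x+he_1,w)-\delta v(x,w)|\le |h|\,|w|^2\,\|v\|_{C^3(B_r)}$.

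The only point requiring care --- and the whole purpose of the hypotheses $x\in B_{r'}$, $|h|<(r-r')/2$ and $|w|<(r-r')/2$ --- is to check that every point at which $v$ or one of its derivatives is evaluated in the argument above indeed lies in $B_r$, where $v$ is of class $C^3$. Each such point is of the form $x+\xi+\tau h e_1$ with $|\xi|\le|w|$ and $\tau\in[0,1]$, and
$$|x+\xi+\tau h e_1|\le |x|+|\xi|+|h|<r'+\frac{r-r'}{2}+\frac{r-r'}{2}=r,$$
so all the Taylor and fundamental-theorem-of-calculus steps are legitimate. I do not expect any substantive obstacle here: this is a purely quantitative bookkeeping lemma, and essentially all of the (very little) work lies in keeping each integration segment inside the domain of smoothness, which is exactly what the smallness of $|h|$ and $|w|$ guarantees.
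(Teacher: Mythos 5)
Your proof is correct and is essentially the same as the paper's: both arguments exploit the product structure $|h|\cdot|w|^2$ by combining a one-variable mean-value/FTC estimate in the $h$-direction (giving the factor $|h|$) with a second-order symmetric Taylor estimate in $w$ (giving the factor $|w|^2$), and both verify the identical domain inclusion $|x+\xi+\tau h e_1|<r'+|w|+|h|<r$. The only cosmetic difference is that the paper first applies the mean-value theorem to $h\mapsto\delta v(x+he_1,w)$ and then Taylor-expands $\delta(\partial_1 v)(\cdot,w)$, whereas you first commute the two increments to reduce to $\delta g(x,w)$ with $g(y)=v(y+he_1)-v(y)$ and then bound $D^2g$ via the FTC in $h$ — the two orderings of the same elementary steps.
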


\begin{proof} Fixed~$x\in B_{r'}$ and~$|w|<
(r-r')/2$, for any~$h\in [(r'-r)/2,(r-r')/2]$ we
set~$g(h):=v(x+he_1+w)+v(x+he_1-w)-2v(x+he_1)$. Then
\begin{eqnarray*}&&  |g(h)-g(0)|\le |h| \sup_{|\xi|\le |h|} |g'(\xi)|
\\ &&\quad\le |h|\sup_{|\xi|\le |h|} \big|\partial_1 v(x+\xi e_1+w)
+\partial_1 v(x+\xi e_1-w)-2 \partial_1 v(x+\xi e_1)\big|.
\end{eqnarray*}
Noticing that~$|x+\xi e_1\pm w|\le r'+|h|+|w|<r$, a second order
Taylor expansion of~$\partial_1 v$ with respect to the
variable~$w$ gives
\begin{equation} \label{ed3}
 \big|\partial_1 v(x+\xi e_1+w)
+\partial_1 v(x+\xi e_1-w)-2 \partial_1 v(x+\xi e_1)\big| \le
|w|^2 \|\partial_1 v\|_{C^2(B_r)}.
\end{equation}
Therefore
\begin{eqnarray*}
|\delta v(x+he_1,w)-\delta v(x,w)|= |g(h)-g(0)| \le |h|\, |w|^2
\|v\|_{C^3(B_r)},
\end{eqnarray*}
as desired.
\end{proof}

\begin{lemma}\label{D1bis}
Let~$r>r'>0$, $v\in W^{1,\infty}(\R^n)$, $h\in\R$. Then, for
any~$w\in\R^n$,
$$ |\delta v(x+he_1,w)-\delta v(x,w)|\le 4|h| \|
\nabla v\|_{L^\infty(\R^n)}.$$
\end{lemma}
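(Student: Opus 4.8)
The statement is the "rough" counterpart of Lemma~\ref{D1}: instead of exploiting three derivatives of $v$ to gain the factor $|w|^2$, here we only use that $v$ is Lipschitz and settle for a bound that is uniform in $w$. Accordingly, the plan is simply to split the second-order increment into three first-order increments in the direction $e_1$ and estimate each of them by the Lipschitz constant of $v$. Note that the parameters $r>r'>0$ play no role in this lemma; they are kept only to mirror the hypotheses of Lemma~\ref{D1}.

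First I would record that, since $v\in W^{1,\infty}(\R^n)$, the function $v$ is Lipschitz continuous with
$$ |v(y)-v(z)|\le \|\nabla v\|_{L^\infty(\R^n)}\,|y-z|\qquad\text{for all }y,z\in\R^n;$$
in particular $|v(y+he_1)-v(y)|\le |h|\,\|\nabla v\|_{L^\infty(\R^n)}$ for every $y\in\R^n$. Then, using the definition~\eqref{delta}, I would write
\begin{eqnarray*}
\delta v(x+he_1,w)-\delta v(x,w)
&=&\big(v(x+he_1+w)-v(x+w)\big)+\big(v(x+he_1-w)-v(x-w)\big)\\
&&-2\big(v(x+he_1)-v(x)\big).
\end{eqnarray*}
Each of the three grouped terms is an increment of $v$ by the vector $he_1$, so applying the Lipschitz bound above to each of them and using the triangle inequality yields
$$ |\delta v(x+he_1,w)-\delta v(x,w)|\le |h|\,\|\nabla v\|_{L^\infty(\R^n)}+|h|\,\|\nabla v\|_{L^\infty(\R^n)}+2|h|\,\|\nabla v\|_{L^\infty(\R^n)},$$
which is exactly $4|h|\,\|\nabla v\|_{L^\infty(\R^n)}$, as claimed.

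There is essentially no obstacle here: the only point worth a line is the standard fact that a $W^{1,\infty}$ function on $\R^n$ admits a Lipschitz representative with Lipschitz constant $\|\nabla v\|_{L^\infty}$, which I would invoke (or prove in one line via mollification / the fundamental theorem of calculus along segments). This lemma, together with Lemma~\ref{D1}, is what will later let us dominate $\delta v(x+he_1,w)$-type integrands uniformly in $h$ — by $|w|^2$ near the origin and by a constant far from it — so as to differentiate under the integral sign via dominated convergence in Lemmata~\ref{D} and~\ref{E}.
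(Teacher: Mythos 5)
Your proof is correct and essentially the same as the paper's: both reduce the estimate to bounding three increments of $v$ in the $e_1$-direction by $|h|$ times the $L^\infty$-norm of a first derivative (the paper via the mean value theorem applied to $g(h):=v(x+he_1+w)+v(x+he_1-w)-2v(x+he_1)$, you via regrouping the difference directly into three Lipschitz increments). The only cosmetic difference is that the paper's argument yields the marginally sharper constant $4|h|\,\|\partial_1 v\|_{L^\infty}$, which of course implies the stated bound with $\|\nabla v\|_{L^\infty}$.
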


\begin{proof} It sufficed to proceed as in the proof of Lemma~{\ref{D1}}, but
replacing~\eqref{ed3} with the following estimate:
\begin{eqnarray*}
\big|\partial_1 v(x+\xi e_1+w) +\partial_1 v(x+\xi e_1-w)-2
\partial_1 v(x+\xi e_1)\big| \le 4 \|\partial_1 v\|_{L^\infty
(\R^n)} .\end{eqnarray*}
\end{proof}

\begin{lemma}\label{D}
Let~$\ell\in\N$, $r\in(0,2)$,~$K$ satisfy~\eqref{sm 1}, and~$U\in
C^{\ell+2}_0(B_r)$. Let~$\gamma=(\gamma_1,\dots,\gamma_n)\in\N^n$
with~$|\gamma|\le \ell \leq k+1$. Then
\begin{equation}\label{002}\begin{split}
&\partial^\gamma_x \int_{\R^n} K(x,w)\,\delta U(x,w)\,dw =
\int_{\R^n} \partial^\gamma_x\Big( K(x,w)\,\delta U(x,w)\Big)\,dw
\\ &\quad= \sum_{{{1\le i\le n}\atop{0\le \lambda_i\le
\gamma_i}}\atop{\lambda=(\lambda_1,\dots,\lambda_n)}} \left(
{\gamma_1}\atop{\lambda_1}\right)\dots \left(
{\gamma_n}\atop{\lambda_n}\right) \int_{\R^n} \partial^{\lambda}_x
K(x,w)\,\delta (\partial^{\gamma-\lambda}_x U)(x,w)\,dw
\end{split}\end{equation}
for any~$x\in B_r$.
\end{lemma}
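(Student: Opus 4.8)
The plan is to establish \eqref{002} by induction on $|\gamma|$, at each stage pushing one more $x$-derivative inside the integral via the Dominated Convergence Theorem, the essential uniform (in the increment) control of the second differences being supplied by Lemmata~\ref{D1} and~\ref{D1bis}. Before starting the induction it is convenient to record why the integrals involved converge absolutely. Extending $U$ by zero we may regard $U\in C^{\ell+2}_c(\R^n)$; then a second-order Taylor expansion gives, for every derivative $v=\partial^\theta_x U$ and all $x,w$, both $|\delta v(x,w)|\le |w|^2\|v\|_{C^2(B_r)}$ and $|\delta v(x,w)|\le 4\|v\|_{L^\infty(\R^n)}$, and moreover $\delta v(x,w)=-2v(x)$ once $|w|>2r$ (so that $x\pm w$ lies outside the support of $U$). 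Combining this with the bound $|\partial^\mu_x K(x,w)|\le C_k|w|^{-n-\sigma}$ from \eqref{sm 1} shows that every integrand $\partial^\lambda_x K(x,w)\,\delta(\partial^\theta_x U)(x,w)$ with $|\lambda|\le k+1$ is dominated, for $x$ ranging in a compact subset of $B_r$, by $C\min\{|w|^{2-n-\sigma},|w|^{-n-\sigma}\}$, which lies in $L^1(\R^n)$ precisely because $0<\sigma<2$.

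For the inductive step write $\gamma=\gamma'+e_i$ with $|\gamma'|=|\gamma|-1<\ell$; note that then $U\in C^{\ell+2}\subseteq C^{|\gamma'-\lambda|+3}$ for every $\lambda\le\gamma'$ and $|\lambda|+1\le|\gamma|\le k+1$, so the inductive hypothesis applies and yields
$$\partial^{\gamma'}_x\int_{\R^n}K(x,w)\,\delta U(x,w)\,dw=\sum_{\lambda\le\gamma'}\binom{\gamma'}{\lambda}\int_{\R^n}\partial^{\lambda}_x K(x,w)\,\delta(\partial^{\gamma'-\lambda}_x U)(x,w)\,dw .$$
It remains to differentiate each summand once more along $e_i$, exchanging $\partial_{x_i}$ with $\int_{\R^n}$. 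Fix $x\in B_r$, pick $\rho\in(0,r-|x|)$, and for $0<|h|<\rho/2$ split the difference quotient of the integrand in $x_i$ as
\begin{align*}
&\frac{\partial^{\lambda}_x K(x+he_i,w)-\partial^{\lambda}_x K(x,w)}{h}\;\delta(\partial^{\gamma'-\lambda}_x U)(x+he_i,w)\\
&\qquad+\;\partial^{\lambda}_x K(x,w)\;\frac{\delta(\partial^{\gamma'-\lambda}_x U)(x+he_i,w)-\delta(\partial^{\gamma'-\lambda}_x U)(x,w)}{h}.
\end{align*}
Here the first quotient is $\le C_k|w|^{-n-\sigma}$ by the mean value theorem and \eqref{sm 1}; the factor $\delta(\partial^{\gamma'-\lambda}_x U)(x+he_i,w)$ is $\le \min\{|w|^2,4\}\,\|U\|_{C^{\ell+2}}$ by Taylor; and the last quotient is $\le\min\{|w|^2,4\}\,\|U\|_{C^{\ell+2}}$ by Lemma~\ref{D1} when $|w|<\rho/2$ and by Lemma~\ref{D1bis} when $|w|\ge\rho/2$. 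Hence the difference quotient is dominated, uniformly in $h$, by $C\min\{|w|^{2-n-\sigma},|w|^{-n-\sigma}\}\in L^1(\R^n)$, while pointwise (for $w\neq0$) it converges to $\partial^{\lambda+e_i}_x K(x,w)\,\delta(\partial^{\gamma'-\lambda}_x U)(x,w)+\partial^{\lambda}_x K(x,w)\,\delta(\partial^{\gamma'-\lambda+e_i}_x U)(x,w)$. Dominated convergence then justifies differentiating under the integral, and summing over $\lambda$ and regrouping via the Pascal rule $\binom{\gamma'}{\lambda}+\binom{\gamma'}{\lambda-e_i}=\binom{\gamma'+e_i}{\lambda}$ produces the right-hand side of \eqref{002} for $\gamma$; the middle equality in \eqref{002} then follows from the ordinary Leibniz rule applied pointwise to $\partial^\gamma_x(K\,\delta U)$ together with $\partial^\theta_x\delta U=\delta(\partial^\theta_x U)$.

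The only point that needs care is producing a single $L^1$ majorant for the difference quotients that is valid simultaneously near the singularity $w=0$ and near $w=\infty$: near the origin the $|w|^{-n-\sigma}$ blow-up of $K$ and its $x$-derivatives is absorbed by the $|w|^2$ vanishing of $\delta$, which is exactly where the assumption $\sigma<2$ enters, while integrability at infinity---where $\delta$ of a compactly supported function is constant in $w$---uses $\sigma>0$; Lemmata~\ref{D1} and~\ref{D1bis} are tailored precisely to make the $|w|^2$, respectively bounded, control of $\delta$ uniform in the increment $h$. Everything else is routine bookkeeping with multi-indices and binomial coefficients.
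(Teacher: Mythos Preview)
Your proposal is correct and follows essentially the same approach as the paper: induction on $|\gamma|$, with the inductive step carried out by applying the Dominated Convergence Theorem to the difference quotient in $x_i$, using \eqref{sm 1} for the kernel increment and Lemmata~\ref{D1} and~\ref{D1bis} for the $\delta(\partial^{\gamma'-\lambda}_x U)$ increment to produce the common majorant $C\min\{|w|^{2-n-\sigma},|w|^{-n-\sigma}\}$. The only cosmetic differences are that you make the product-rule splitting of the difference quotient and the Pascal regrouping of the binomials explicit, whereas the paper bundles everything into a single function $\theta(x,w)$ and checks $|\theta(x+he_1,w)-\theta(x,w)|\le C|h|\min\{|w|^{-n-\sigma},|w|^{2-n-\sigma}\}$ directly.
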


\begin{proof} The latter equality follows from the standard product
derivation formula, so we focus on the proof of the first
identity. The proof is by induction over~$|\gamma|$.
If~$|\gamma|=0$ the result is trivially true, so we consider the
inductive step. We take~$x$ with~$r':=|x|<r$, we suppose
that~$|\gamma|\le \ell-1$ and, by inductive hypothesis, we know
that
$$g_\gamma(x):=
\partial^\gamma_x \int_{\R^n} K(x,w)\,\delta U(x,w)\,dw=
\int_{\R^n} \theta(x,w)\,dw$$ with
$$ \theta(x,w):=
\sum_{{{1\le i\le n}\atop{0\le \lambda_i\le
\gamma_i}}\atop{\lambda=(\lambda_1,\dots,\lambda_n)}} \left(
{\gamma_1}\atop{\lambda_1}\right)\dots \left(
{\gamma_n}\atop{\lambda_n}\right)
\partial^{\lambda}_x K(x,w)\,\delta
(\partial^{\gamma-\lambda}_x U)(x,w)\,dw.$$ By~\eqref{sm 1},
if~$0<|h|< (r-r')/2$ then
\begin{equation}\label{001}
|\partial^{\lambda}_x K(x+he_1,w)-
\partial^{\lambda}_x K(x,w)|\le {C_{|\lambda|+1}} |h|\,
|w|^{-n-\sigma}.
\end{equation}
Moreover, if~$|w|<(r-r')/2$, we can apply Lemma~\ref{D1}
with~$v:=\partial^{\gamma-\lambda}_x U$ and obtain
\begin{equation}\label{v0}
 |\delta (\partial^{\gamma-\lambda}_x U)(x+he_1,w)-
\delta(\partial^{\gamma-\lambda}_x U)(x,w)|\\
\le |h|\, |w|^2\|U\|_{C^{|\gamma-\lambda|+3}(B_r)}.
\end{equation}
On the other hand, by Lemma~\ref{D1bis} we obtain
$$ |\delta (\partial^{\gamma-\lambda}_x U)(x+he_1,w)-\delta
(\partial^{\gamma-\lambda}_x U)(x,w)|\le \,4\,|h|\, \|
\partial^{\gamma-\lambda}_x U\|_{C^1(\R^n)}.$$
All in all,
\begin{equation}\label{eq:1}\begin{split}
&|\delta (\partial^{\gamma-\lambda}_x U)(x+he_1,w)-\delta
(\partial^{\gamma-\lambda}_x U)(x,w)|
\\ &\qquad\le\,|h|\,
\|U\|_{C^{|\gamma-\lambda|+3}(\R^n)}\min\{4,|w|^2\}.
\end{split}\end{equation}
Analogously, a simple Taylor expansion provides also the bound
\begin{equation}\label{eq:2}
|\delta (\partial^{\gamma-\lambda}_x U)(x,w)|\le\,
\|U\|_{C^{|\gamma-\lambda|+2}(\R^n)}\min\{4,|w|^2\}.
\end{equation}
Hence, \eqref{sm 1}, \eqref{001}, \eqref{eq:1},  and \eqref{eq:2}
give
\begin{eqnarray*}
&& \big|
\partial^{\lambda}_x K(x+he_1,w)\,\delta
(\partial^{\gamma-\lambda}_x U)(x+he_1,w) -
\partial^{\lambda}_x K(x,w)\,\delta
(\partial^{\gamma-\lambda}_x U)(x,w)
\big| \\
&\le& \big|
\partial^{\lambda}_x K(x+he_1,w)\,\big[\delta
(\partial^{\gamma-\lambda}_x U)(x+he_1,w) -\delta
(\partial^{\gamma-\lambda}_x U)(x,w) \big]\big|
\\ &&+
\big|\big[
\partial^{\lambda}_x K(x+he_1,w)
-
\partial^{\lambda}_x K(x,w)
\big]\delta (\partial^{\gamma-\lambda}_x U)(x,w) \big|
\\ &\le& C_1 |h|\,\min\{|w|^{-n-\sigma},|w|^{2-n-\sigma} \},
\end{eqnarray*}
with~$C_1>0$ depending only on~$\ell$, $C_\ell$
and~$\|U\|_{C^{\ell+2}(\R^n)}$. As a consequence,
$$ |\theta(x+he_1,w)-\theta(x,w)|\le
C_2 |h|\,\min \{|w|^{-n-\sigma},|w|^{2-n-\sigma} \},$$ and, by the
Dominated Convergence Theorem, we get
\begin{eqnarray*}
\int_{\R^n}\partial_{x_1}\theta (x,w)\,dw &=& \lim_{h\rightarrow
0} \int_{\R^n}\frac{\theta (x+he_1,w)-\theta(x,w)}{h}\,dw
\\ &=& \lim_{h\rightarrow 0}\frac{g_\gamma(x+he_1)-g_\gamma (x)}{h}
\\ &=& \partial_{x_1}g_\gamma(x),
\end{eqnarray*}
which proves~\eqref{002} with~$\gamma$ replaced by~$\gamma+e_1$.
Analogously one could prove the same result with $\gamma$
replaced by~$\gamma+e_i$, concluding the inductive step.
\end{proof}

The differentiation under the integral sign in~\eqref{002} may
also be obtained under slightly different assumptions, as next
result points out:

\begin{lemma}\label{E}
Let~$\ell\in\N$, $R>r>0$. Let~$U\in C^{\ell+1}(\R^n)$ with~$U=0$
in~$B_{R}$. Let~$\gamma=(\gamma_1,\dots,\gamma_n)\in\N^n$
with~$|\gamma|\le \ell$. Then~\eqref{002} holds true for any~$x\in
B_r$.
\end{lemma}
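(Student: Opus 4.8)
The plan is to mirror the proof of Lemma~\ref{D}, the point being that here the difficulty has moved entirely to the region where $|w|$ is large. Indeed, since $U$ vanishes on $B_R$ while we only evaluate at $x\in B_r$ with $r<R$, we will have $\delta(\partial^{\mu}_x U)(x,w)=0$ as soon as $|w|$ is small compared with $R-|x|$, so that the integrand in \eqref{002} and all of its $x$-derivatives are supported away from the singularity $w=0$. As in Lemma~\ref{D}, the second (Leibniz) identity in \eqref{002} follows from the product rule once the first identity is known, so the real task is to justify differentiation under the integral sign, which we carry out by induction on $|\gamma|$ using the Dominated Convergence Theorem. (As in Lemma~\ref{D} we tacitly assume $\ell\le k+1$, so that \eqref{sm 1} supplies the required regularity of $K$; we also use that $U$ has bounded derivatives up to order $\ell+1$, which is what occurs in our applications and guarantees absolute convergence of all the integrals below — sublinear growth of those derivatives would already suffice.)

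For $|\gamma|=0$ there is nothing to prove. For the inductive step, fix $x_0\in B_r$ and pick $\rho>0$ so small that $\overline{B_\rho(x_0)}\subset B_R$, and set $d:=R-|x_0|-\rho>0$. If $x\in B_{\rho/2}(x_0)$, $|h|<\rho/2$ and $|w|<d$, then $|x+he_1|<R$ and $|x+he_1\pm w|\le |x_0|+\rho+d=R$, so $\partial^{\gamma-\lambda}_x U$ vanishes at each of these points and hence $\delta(\partial^{\gamma-\lambda}_x U)(x+he_1,w)=\delta(\partial^{\gamma-\lambda}_x U)(x,w)=0$ for $|w|<d$. In particular, writing $\theta(x,w)$ for the integrand defined as in the proof of Lemma~\ref{D}, the difference quotient $h^{-1}\big(\theta(x+he_1,w)-\theta(x,w)\big)$ vanishes identically on $\{|w|<d\}$. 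On $\{|w|\ge d\}$ we argue exactly as in Lemma~\ref{D}: the mean value theorem together with \eqref{sm 1} gives
$$|\partial^{\lambda}_x K(x+he_1,w)-\partial^{\lambda}_x K(x,w)|\le C_{|\lambda|+1}\,|h|\,|w|^{-n-\sigma},$$
Lemma~\ref{D1bis} gives $|\delta(\partial^{\gamma-\lambda}_x U)(x+he_1,w)-\delta(\partial^{\gamma-\lambda}_x U)(x,w)|\le 4|h|\,\|\nabla\partial^{\gamma-\lambda}_x U\|_{L^\infty(\R^n)}$, and trivially $|\delta(\partial^{\gamma-\lambda}_x U)(x,w)|\le 4\|\partial^{\gamma-\lambda}_x U\|_{L^\infty(\R^n)}$; adding and subtracting a mixed term as in Lemma~\ref{D} then yields
$$|\theta(x+he_1,w)-\theta(x,w)|\le C\,|h|\,|w|^{-n-\sigma}\,\mathbf{1}_{\{|w|\ge d\}},$$
with $C$ depending only on $\ell$, $C_\ell$, $d$, and $\|U\|_{C^{\ell+1}(\R^n)}$ (using the boundedness just mentioned). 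Since $|w|^{-n-\sigma}\mathbf{1}_{\{|w|\ge d\}}\in L^1(\R^n)$, the Dominated Convergence Theorem lets us pass to the limit $h\to0$ under the integral sign, giving $\partial_{x_1}g_\gamma(x)=\int_{\R^n}\partial_{x_1}\theta(x,w)\,dw$, i.e., \eqref{002} with $\gamma$ replaced by $\gamma+e_1$; repeating the argument for each coordinate direction $e_i$ closes the induction.

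In short, this is a simplified instance of Lemma~\ref{D}: there the heart of the matter was the bound $\delta U(x,w)\lesssim|w|^2$ needed to absorb the singularity of $K$ at the origin, whereas here that region contributes nothing and only the decay of $K$ at infinity (together with the boundedness of the derivatives of $U$) plays a role. So there is no genuine obstacle here; the only mild care required is in choosing $\rho$ and $d$ so that the vanishing of $U$ propagates to all the relevant points $x+he_1\pm w$ uniformly for $x$ near $x_0$ and $h,w$ small, and in keeping track of the implicit regularity and boundedness hypotheses on $U$ (and of $\ell\le k+1$) that make the integrals and their derivatives well defined.
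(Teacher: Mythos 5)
Your proposal is correct and follows the same route as the paper's proof: for $|w|$ small relative to $R-|x|$, the vanishing of $U$ on $B_R$ forces $\delta(\partial^{\gamma-\lambda}_x U)$ to vanish, so the integrand is supported away from $w=0$, and this observation replaces \eqref{v0} from Lemma~\ref{D}, after which the dominated-convergence/induction scheme of Lemma~\ref{D} runs unchanged. The only cosmetic difference is that the paper takes the uniform cutoff radius $(R-r)/2$ for all $x\in B_r$, whereas you use a point-dependent $d=R-|x_0|-\rho$; both work equally well.
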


\begin{proof} If~$x\in B_r$, $w\in B_{(R-r)/2}$
and~$|h|\le (R-r)/2$, we have that~$|x+w+h e_1|< R$ and so~$\delta
U(x+he_1,w)=0$. In particular
$$ \delta U(x+he_1,w)-\delta U(x,w)=0$$
for small~$h$ when~$w\in B_{(R-r)/2}$. This formula
replaces~\eqref{v0}, and the rest of the proof goes on as the one
of Lemma~\ref{D}.
\end{proof}

\subsubsection{Integral computations}

Here we collect some integral computations which will be used in
the proof of Theorem \ref{boot}.

\begin{lemma}
Let $v:\R^n \to \R$ be smooth and with all its derivatives
bounded. Let $x\in B_{1/4}$, and $\gamma$, $\lambda\in \N^n$,
with~$\gamma_i\ge\lambda_i$ for any~$i\in\{1,\dots,n\}$. Then
there exists a constant $C'>0$, depending only on $n$ and
$\sigma$, such that
\begin{equation}\label{A2 estimate}
\left|\int_{\R^n} \partial^{\lambda}_x K(x,w)\,\delta
(\partial^{\gamma-\lambda}_x v)(x,w)\,dw\right| \le C' \,
C_{|\gamma|} \,\| v\|_{C^{|\gamma-\lambda|+2} (\R^n)}
.\end{equation} Furthermore, if \begin{equation}\label{V001}
{\mbox{$v=0$ in $B_{1/2}$}}\end{equation} we have
\begin{equation}\label{A3 estimate}
\left|\int_{\R^n} \partial^{\lambda}_x K(x,w)\,\delta
(\partial^{\gamma-\lambda}_x v)(x,w)\,dw\right|\le C'\,
C_{|\gamma|}\, \| v\|_{L^\infty(\R^n)}.
\end{equation}
\end{lemma}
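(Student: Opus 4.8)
The plan is to split the $w$-integral into a region near the origin, where the $|w|^2$-vanishing of $\delta(\partial^{\gamma-\lambda}_x v)$ compensates the singularity of the kernel, and a region far from it, where the kernel is integrable. Throughout I write $m:=\gamma-\lambda$, which is a genuine multi-index since $\gamma_i\ge\lambda_i$, with $|m|=|\gamma|-|\lambda|$. I will use the decay estimates from \eqref{sm 1}, which implicitly requires $|\gamma|\le k+1$ (the constant then being $C_{|\gamma|}$ as in the statement): since $x\in B_{1/4}\subset B_1$, these give $|\partial^\lambda_x K(x,w)|\le C_{|\gamma|}\,|w|^{-n-\sigma}$ and, more generally, $|\partial^\theta_w\partial^\lambda_x K(x,w)|\le C_{|\gamma|}\,|w|^{-n-\sigma-|\theta|}$ whenever $|\lambda|+|\theta|\le k+1$.

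To prove \eqref{A2 estimate} I would distinguish $|w|<1$ and $|w|\ge1$. On $|w|<1$ a second-order Taylor expansion of $\partial^m_x v$ in the $w$-variable gives $|\delta(\partial^m_x v)(x,w)|\le|w|^2\,\|\partial^m_x v\|_{C^2(\R^n)}\le|w|^2\,\|v\|_{C^{|m|+2}(\R^n)}$, so that portion of the integral is $\le C_{|\gamma|}\,\|v\|_{C^{|m|+2}(\R^n)}\int_{\{|w|<1\}}|w|^{2-n-\sigma}\,dw$, a finite quantity because $\sigma<2$. On $|w|\ge1$ I would use instead the crude bound $|\delta(\partial^m_x v)(x,w)|\le4\,\|\partial^m_x v\|_{L^\infty(\R^n)}\le4\,\|v\|_{C^{|m|+2}(\R^n)}$, so that portion is $\le 4\,C_{|\gamma|}\,\|v\|_{C^{|m|+2}(\R^n)}\int_{\{|w|\ge1\}}|w|^{-n-\sigma}\,dw$, finite because $\sigma>0$. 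Adding the two contributions gives \eqref{A2 estimate} with $C'=C'(n,\sigma)$.

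To prove \eqref{A3 estimate} I would extract extra cancellation from \eqref{V001}. Since $x\in B_{1/4}\subset B_{1/2}$ we have $\partial^m_x v(x)=0$, so $\delta(\partial^m_x v)(x,w)=\partial^m_x v(x+w)+\partial^m_x v(x-w)$; and if $|w|\le1/4$ then $|x\pm w|<1/2$, hence $\partial^m_x v(x\pm w)=0$ too. Thus the integrand is supported in $\{|w|>1/4\}$, where the kernel is bounded; if $m=0$ this already gives the claim, since there $|\delta v(x,w)|\le2\|v\|_{L^\infty(\R^n)}$. If $m\ne0$, I would write $\partial^m_x v(x\pm w)=(\pm1)^{|m|}\partial^m_w[v(x\pm w)]$ and integrate by parts $|m|$ times in $w$ over $\{|w|>1/4\}$. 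The boundary terms on $\{|w|=1/4\}$ vanish because there $|x\pm w|<1/2$, so $v$ and all its derivatives vanish by \eqref{V001}, and the contributions at infinity vanish by the polynomial decay of the $w$-derivatives of $K$ against the bounded function $v$. What remains is $\int_{\{|w|>1/4\}}\partial^m_w\partial^\lambda_x K(x,w)\,[(-1)^{|m|}v(x+w)+v(x-w)]\,dw$, which by $|\partial^m_w\partial^\lambda_x K(x,w)|\le C_{|\gamma|}\,|w|^{-n-\sigma-|m|}$ is bounded by $2\,C_{|\gamma|}\,\|v\|_{L^\infty(\R^n)}\int_{\{|w|>1/4\}}|w|^{-n-\sigma-|m|}\,dw$, finite because $\sigma>0$; this is \eqref{A3 estimate}.

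The one point I expect to require care is the integration by parts in the proof of \eqref{A3 estimate}: it must produce no surviving boundary contributions, and this is precisely why one first restricts the integral to $\{|w|>1/4\}$ — on $\{|w|=1/4\}$ the hypothesis \eqref{V001} together with $x\in B_{1/4}$ forces $v$ and all its derivatives to vanish — and why the decay estimates of \eqref{sm 1} are needed to dispose of the terms at infinity. Everything else reduces to the routine interplay between Taylor's formula and the integrability of $|w|^{-n-\sigma}$ near infinity (valid since $\sigma>0$) and of its improvement $|w|^{2-n-\sigma}$ near the origin (valid since $\sigma<2$).
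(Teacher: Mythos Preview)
Your argument is correct and follows essentially the same route as the paper: for \eqref{A2 estimate} you split the integral at a fixed radius and use the Taylor bound $|\delta(\partial^{\gamma-\lambda}_x v)(x,w)|\le \min\{4,|w|^2\}\,\|v\|_{C^{|\gamma-\lambda|+2}}$ against $|\partial^\lambda_x K|\le C_{|\gamma|}|w|^{-n-\sigma}$, and for \eqref{A3 estimate} you observe that the integrand is supported in $\{|w|>1/4\}$ and integrate by parts $|\gamma-\lambda|$ times to move the $w$-derivatives onto the kernel. The only difference is cosmetic: the paper splits at radius $2$ rather than $1$, and writes the integration by parts over $\R^n\setminus B_{1/4}$ without explicitly discussing the boundary terms you (correctly) check vanish.
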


\begin{proof} By \eqref{sm 1} and \eqref{eq:2} (with $U=v$),
\begin{eqnarray*}
&& \int_{\R^n} \big|\partial^{\lambda}_x K(x,w)\big|\, \Big|
\,\delta (\partial^{\gamma-\lambda}_x v)(x,w)\Big|\,dw
\\ &&\le C_{|\lambda|} \left( \| v\|_{C^{|\gamma-\lambda|+2}(\R^n)}
\int_{B_2} |w|^{-n-\sigma+2}\,dw+ 4\|
v\|_{C^{|\gamma-\lambda|}(\R^n)} \int_{\R^n\setminus B_2}
|w|^{-n-\sigma}\,dw \right),
\end{eqnarray*}
which proves \eqref{A2 estimate}.

We now prove \eqref{A3 estimate}. For this we notice that, thanks
to~\eqref{V001}, $v(x+w)$ and $v(x-w)$ (and also their
derivatives) are equal to zero if $x$ and $w$ lie in $B_{1/4}$.
Hence, by an integration by parts we see that
\begin{eqnarray*}
&& \int_{\R^n} \partial^{\lambda}_x K(x,w)\,\delta
(\partial^{\gamma-\lambda}_x v)(x,w)\,dw
\\
&=& \int_{\R^n}\partial^{\lambda}_x
K(x,w)\,\partial^{\gamma-\lambda}_w \big[v(x+w)- v(x-w)\big]\,dw
\\ &=&
\int_{\R^n\setminus B_{1/4}} \partial^{\lambda}_x
K(x,w)\,\partial^{\gamma-\lambda}_w \big[v(x+w)- v(x-w)\big]\,dw
\\ &=& (-1)^{|\gamma-\lambda|}
\int_{\R^n\setminus B_{1/4}}
\partial^{\lambda}_x\partial^{\gamma-\lambda}_w K(x,w)\,
\big[v(x+w)- v(x-w)\big]\,dw.
\end{eqnarray*}
Consequently, by~\eqref{sm 1},
\begin{eqnarray*}
&& \left|\int_{\R^n} \partial^{\lambda}_x K(x,w)\,\delta
(\partial^{\gamma-\lambda}_x v)(x,w)\,dw\right|
\\ &&\le 2C_{|\gamma|}\, \| v\|_{L^\infty(\R^n)}
\int_{\R^n\setminus B_{1/4}} |w|^{-n-\sigma-|\gamma-\lambda|}
\,dw,
\end{eqnarray*}
proving \eqref{A3 estimate}.
\end{proof}

\subsection{Approximation by nicer kernels}
\label{TTT}

In what follows, it will be convenient
to approximate the solution $u$ of \eqref{eq:main} with smooth functions $u_\varepsilon$
obtained
by solving equations similar to \eqref{eq:main}, but with
kernels $K_\varepsilon$ which coincide with the fractional
Laplacian in a neighborhood of the origin. Indeed, this will allow
us to work with smooth functions, ensuring that in our
computations all integrals converge. We will then prove uniform
estimates on $u_\varepsilon$, which will give
the desired $C^{\sigma+\alpha}$-bound on $u$ by letting $\varepsilon \to 0$.

To simplify the notation, up to multiply both $K$ and $f$
by $1/a_0$, we assume without loss of generality that the constant $a_0$
in \eqref{ass silv} is equal to $1$.
\\

Let~$\eta\in C^{\infty}(\R^n)$ satisfy
$$
   \eta=\left\{\begin{array}{ll}
    1 &\quad\mbox{in }  B_{1/2}, \\
    0 &\quad\mbox{in } \R^n\setminus B_{3/4},
  \end{array}\right.
$$
and for any $\varepsilon,\delta>0$ set
$\eta_{\varepsilon}(w):=\eta\big(\frac{w}{\varepsilon}\big)$ for
any $\varepsilon>0$,
$\hat\eta_{\delta}(x):=\delta^{-n}\eta(x/\delta)$. Then we define
\begin{equation}\label{19bis}
K_{\varepsilon}(x,w):=\eta_{\varepsilon}(w)\frac{2-\sigma}{|w|^{n+\sigma}}+(1-\eta_{\varepsilon}(w))\hat
K_{\varepsilon}(x,w),
\end{equation}
where
\begin{equation}
\label{eq:hat K} \hat K_{\varepsilon}(x,w):=K(x,w)\ast \Big(
\hat\eta_{\varepsilon^2}(x)\hat\eta_{\varepsilon^2}(w)\Big),
\end{equation}
and
\begin{equation}\label{17b}
L_{\varepsilon}v(x):=\int_{\R^n}{K_{\varepsilon}(x,w)\,\delta
v(x,w) dw}.\end{equation} We also define
\begin{equation}
\label{eq:f eps}
f_{\varepsilon}(x):=f(x,u(x))\ast\hat\eta_{\varepsilon}(x).
\end{equation}
Note that we get a family $f_\varepsilon\in C^\infty(B_{1})$ such
that
$$ {\mbox{$\displaystyle\lim_{\varepsilon\to 0^+}{f_{\varepsilon}}=f$
uniformly in~$B_{3/4}$.}}$$ Finally, we define $u_{\varepsilon}\in
L^\infty(\R^{n})\cap C(\R^n)$ as the unique solution to the
following linear problem:
\begin{equation}\label{20bis}
\begin{array}{llll}\left\{\begin{matrix}
L_{\varepsilon}{u_{\varepsilon}}=
f_{\varepsilon}(x)&\quad\mbox{in }B_{3/4}\\
u_{\varepsilon}=u&\quad\mbox{in } \R^{n}\setminus B_{3/4}.
\end{matrix}\right.
\end{array}
\end{equation}
It is easy to check that the kernels $K_\varepsilon$ satisfy
\eqref{ass silv} and \eqref{sm 1} with constants independent of
$\varepsilon$ (recall that, to simplify the notation, we are assuming $a_0=1$).
Also, since $K$ satisfies assumption \eqref{sm 1} with
$k=0$ and the convolution parameter $\varepsilon^2$ in
\eqref{19bis} is much smaller than $\varepsilon$, the operators
$L_\varepsilon$ converge to the operator associated to $K$ in the
weak sense introduced in~\cite[Definition 22]{CS2011}. Indeed, let
$v$ a smooth function satisfying
\begin{equation}\label{v condition}
|v|\leq M \quad\mbox{in $\R^{n}$},\qquad |v(w)-v(x)-(w-x)\cdot\nabla v(x)|\leq M|x-w|^{2}\quad\forall\,w\in
B_{1}(x),
\end{equation}
for some $M>0$.
Then, from \eqref{sm 1} and \eqref{v condition}, it follows that
\begin{eqnarray}
&&\int_{\mathbb{R}^{n}}{\left|\eta_{\varepsilon}(w)\frac{2-\sigma}{|w|^{n+\sigma}}+(1-\eta_{\varepsilon}(\omega))\bigl(K(x,w)\ast\hat{\eta}_{\varepsilon^2}(x)\hat{\eta}_{\varepsilon^{2}}(w)\bigr)-K(x,w)\right||\delta v(x,w)|\,dw}\nonumber\\
&\le&\int_{\mathbb{R}^{n}}{\left(\eta_{\varepsilon}(w)\Big|\frac{2-\sigma}{|w|^{n+\sigma}}-K(x,w)\Big|+(1-\eta_{\varepsilon}(\omega))
\Big|K(x,w)\ast\hat{\eta}_{\varepsilon^2}(x)\hat{\eta}_{\varepsilon^{2}}(w)-K(x,w)\Big|\right)}\nonumber\\
&&\qquad\qquad \qquad\qquad\qquad \qquad\qquad\qquad \qquad \times|\delta v(x,w)|\,dw\nonumber\\
&\leq&\int_{B_{\varepsilon}}{C|w|^{2-n-\sigma}}+\int_{\R^n\setminus B_{\varepsilon}}{\bigl|K(x,w)\ast\hat{\eta}_{\varepsilon^2}(x)\hat{\eta}_{\varepsilon^{2}}(w)-K(x,w)\bigr|\,|\delta v(x,w)|\,dw}\nonumber\\
&\leq&
C\varepsilon^{2-\sigma}+I,
\label{ne}
\end{eqnarray}
with
$$I:=\int_{\R^n\setminus B_{\varepsilon}}{\bigl|K(x,w)\ast\hat{\eta}_{\varepsilon^2}(x)\hat{\eta}_{\varepsilon^{2}}(w)-K(x,w)\bigr|\,|\delta
v(x,w)|\,dw}.$$ By \eqref{sm 1}, \eqref{v condition}, and the fact
that $\sigma>1$, we have
\begin{eqnarray*}
I&=&\int_{\R^n\setminus B_{\varepsilon}}{\int_{B_{1}}{\int_{B_{1}}{\left|K(x-\varepsilon^2y,w-\varepsilon^2\tilde{w})\eta(y)\eta(\tilde{w})-K(x,w)\right|\,dy
\,d\tilde{w}\,|\delta v(x,w)|\,dw}}}\\
&\leq&\int_{\R^n\setminus B_{\varepsilon}}{\frac{C\varepsilon^2}{|w|^{n+1+\sigma}}\,|\delta v(x,w)|\,dw}\\
&\leq&C\int_{B_{1}\setminus B_{\varepsilon}}{\frac{\varepsilon^2}{|w|^{n-1+\sigma}}\,dw}+C\int_{\R^n\setminus B_{1}}{\frac{\varepsilon^2}{|w|^{n+1+\sigma}}\,dw}\\
&\leq &C(\varepsilon^{3-\sigma}+\varepsilon^{2}).
\end{eqnarray*}
Combining this estimate with \eqref{ne}, we get
$$\int_{\mathbb{R}^{n}}{\left|\eta_{\varepsilon}(w)\frac{2-\sigma}{|w|^{n+\sigma}}+(1-\eta_{\varepsilon}(\omega))(K(x,w)\ast\hat{\eta}_{\varepsilon^2}(x)\hat{\eta}_{\varepsilon^{2}}(w))-K(x,w)\right|\delta v(x,w)dw}\leq C\varepsilon^{2-\sigma},$$
where $C$ depends of $M$ and $\sigma$. Since $\sigma<2$ we conclude
that
$$\|L_{\varepsilon}-L\|\to 0 \quad\mbox{as $\varepsilon\to 0$.}$$
Thanks to this fact, we can repeat almost \textit{verbatim}\footnote{In order to repeat the
argument in the proof of \cite[Lemma 7]{CS2011} one needs to know
that the functions $u_\varepsilon$ are equicontinuous, which is a
consequence of \cite[{Lemmata 2 and 3}]{CS2011}. To be precise, to apply
\cite[Lemma 3]{CS2011} one would need the kernels to satisfy the
bounds $\frac{(2-\sigma)\lambda}{|w|^{n+\sigma}}\leq K_*(x,w)\leq
\frac{(2-\sigma)\Lambda}{|w|^{n+\sigma}}$ for all $w \neq 0$,
while in our case the kernel $K$ (and so also $K_\varepsilon$)
satisfies
\begin{equation}\label{Lo}
\frac{(2-\sigma)\lambda}{|w|^{n+\sigma}}\leq K(x,w)\leq
\frac{(2-\sigma)\Lambda}{|w|^{n+\sigma}}\qquad \forall\,|w|\leq
r_0
\end{equation}
with $\lambda:={a_0}-\eta$,
$\Lambda:=a_{0}+\eta$, and
$r_0>0$ (observe that, by our assumptions in \eqref{ass silv},
$\lambda \geq 3{a_0}/4$).

However this is not a big problem: if $v \in L^\infty(\R^n)$
satisfies
$$
\int_{\R^n} K_*(x,w)\,\delta v(x,w)\,dw = f(x) \qquad\text{in
$B_{3/4}$}
$$
for some kernel satisfying {\eqref{sm 1} and}
$\frac{(2-\sigma)\lambda}{|w|^{n+\sigma}}\leq K_*(x,w)\leq
\frac{(2-\sigma)\Lambda}{|w|^{n+\sigma}}$ only for $|w| \leq r_0$,
we define $K'(x,w):=\zeta(w)K_*(x,w) +
(2-\sigma)\frac{1-\zeta(w)}{|w|^{n+\sigma}}$, with $\zeta$ a
smooth cut-off function supported inside $B_{r_0}$, to get
$$
\int_{\R^n} K'(x,w)\,\delta v(x,w)\,dw = f(x) + \int_{\R^n}
[1-\zeta(w)]\left({-}K_*(x,w)+ \frac{2-\sigma}{|w|^{n+\sigma}}
\right)\,\delta v(x,w)\,dw.
$$
Since $1-\zeta(w)=0$ near the origin, {by assumption \eqref{sm 1}}, the second integral is
uniformly bounded as a function of $x$, so \cite[Lemma 3]{CS2011}
applied to $K'$ gives the desired equicontinuity.

Finally, the uniqueness for the boundary problem
$$
\left\{\begin{matrix}
\int_{\R^n} K(x,w) \,\delta v(x,w)\,dw=f(x,u(x))&\quad\mbox{in $B_{3/4}$,}\\
v=u\quad&\mbox{in $\R^{n}\setminus B_{3/4}$.}
\end{matrix}\right.
$$
follows by a standard comparison principle argument (see for
instance the argument used in the proof of \cite[Theorem
3.2]{BCF}). }
the proof of \cite[Lemma 7]{CS2011} to obtain the uniform convergence
\begin{equation}\label{Uu}
u_{\varepsilon}\to u \quad \text{on $\R^n$} \qquad\mbox{as $\varepsilon\to 0$.}
\end{equation}

\subsection{Smoothness of the approximate solutions}

We prove now that the functions $u_{\varepsilon}$ defined in the
previous section are of class $C^{\infty}$ inside a small ball
(whose size is uniform with respect to $\varepsilon$): namely,
there exists $r\in (0,1/4)$ such that, for any $m\in\N^{n}$
\begin{equation}\label{LE10} \| D^m u_{\varepsilon}
\|_{L^\infty(B_{r})}\le C \end{equation} for some positive
constant $C= C(m, \sigma, \varepsilon,\| u\|_{L^\infty(\R^n)},
\|f\|_{ L^\infty(B_{1}\times\R) })$.

For this, we observe that by~\eqref{19bis}
$$ \frac{2-\sigma}{|w|^{n+\sigma}}=K_\varepsilon(x,w)-
(1-\eta_{\varepsilon}(w))\hat K_{\varepsilon}(x,w)
+(1-\eta_\varepsilon(w))\,\frac{2-\sigma}{|w|^{n+\sigma}} ,$$ so for any  $x\in B_{1/4}$
\begin{eqnarray*}
\frac{2-\sigma}{2c_{n,\sigma}}(-\Delta)^{\sigma/2}u_{\varepsilon}(x)&=&
\int_{\R^n}{\frac{2-\sigma}{|w|^{n+\sigma}}\delta
u_{\varepsilon}(x,w)dw}\\
\qquad&=&f_{\varepsilon}(x)-\int_{\R^n}{(1-\eta_{\varepsilon}(w))\hat
K_{\varepsilon}(x,w)\, \delta
u_{\varepsilon}(x,w)dw}\\
\quad&&+\int_{\R^n}{(1-\eta_{\varepsilon}(w))\frac{2-\sigma}{|w|^{n+\sigma}}\,
\delta u_{\varepsilon}(x,w)dw}
\end{eqnarray*}
(here $c_{n,\sigma}$ is the positive constant that appears in the
definition of the fractional Laplacian, see e.g. \cite{PhS,
guida}). Then, for any $x\in B_{1/4}$ it follows that
\begin{eqnarray}
&& (-\Delta)^{\sigma/2}u_{\varepsilon}(x)
\nonumber\\
&=&d_{n,\sigma}\Big[f_{\varepsilon}(x)+\int_{\R^n}{(1-\eta_{\varepsilon}(w))\Big(\frac{2-\sigma}{|w|^{n+\sigma}}-\hat
K_{\varepsilon}(x,w)\Big)\delta
u_{\varepsilon}(x,w)dw}\Big]\nonumber\\
&=:&d_{n,\sigma}[f_{\varepsilon}(x)+h_\varepsilon(x)]\label{fund.eq}\\
&=:&d_{n,\sigma}g_\varepsilon(x).\nonumber
\end{eqnarray}
with $\displaystyle d_{n,\sigma}:=\frac{2c_{n,\sigma}}{2-\sigma}.$

Making some changes of variables we can rewrite  $h_\varepsilon$
as follows:
\begin{eqnarray}\label{BBB0}
\nonumber h_\varepsilon(x)&=&\int_{\R^n}
(1-\eta_{\varepsilon}(w-x))\Big(\frac{2-\sigma}{|w-x|^{n+\sigma}}-
\hat K_{\varepsilon}(x,w-x)\Big)u_{\varepsilon}(w) dw\\
&+&\int_{\R^n} (1-\eta_{\varepsilon}(x-w))
\Big(\frac{2-\sigma}{|w-x|^{n+\sigma}}-\hat
K_{\varepsilon}(x,x-w)\Big)u_{\varepsilon}(w) dw \nonumber
\\
&-&2 u_{\varepsilon}(x)\int_{\R^n}
(1-\eta_{\varepsilon}(w))\Big(\frac{2-\sigma}{|w|^{n+\sigma}}-\hat
K_{\varepsilon}(x,w)\Big) dw.
\end{eqnarray}
We now notice that ``the function $h_{{\varepsilon}}$ is locally
as smooth as $u_{\varepsilon}$'', is the sense that for any $m \in
\N$ and $U\subset B_{1/4}$ open we have
\begin{equation}\label{BL7}
\|h_{{\varepsilon}}\|_{C^{m}(U)} \leq
{C_{\varepsilon,\,m}}\left(1+\|u_{{\varepsilon}}\|_{C^{m}(U)}\right)
\end{equation}
for some constant $C_{\varepsilon,\,m}>0$. To see this observe that, in the first two
integrals, the variable $x$ appears only inside $\eta_\varepsilon$
and in the kernel $\hat K_{\varepsilon}$, and $\eta_{\varepsilon}$
is equal to $1$ near the origin. Hence the first two integrals are
smooth functions of $x$ (recall that $\hat K_{\varepsilon}$ is
smooth, see \eqref{eq:hat K}). The third term is clearly as
regular as $u_{\varepsilon}$ because the third integral is smooth
by the same reason as before. This proves \eqref{BL7}.

We are now going to prove that the functions $u_{\varepsilon}$
belong to $C^\infty(B_{1/5})$, with
\begin{equation}\label{LE1}
\|  u_{\varepsilon} \|_{C^m(B_{1/4-r_{m}})}\le
C(r_1,m,{\sigma},\varepsilon, \|
u_{\varepsilon}\|_{L^\infty(\R^n)},\|f\|_{ L^\infty(B_{1}\times\R)
})
\end{equation}
 for any $m\in\N$, where
$r_{m}:=1/20 - 25^{-m}$ (so that $1/4-r_m>1/5$ for any $m$).

To show this, we begin by observing that, since $\sigma\in(1,2)$,
by \eqref{fund.eq}, \eqref{BL7}, and \cite[Theorem 61]{CS2011}, we
have that $u_{\varepsilon}\in L^\infty(\R^{n})\cap
C^{1,\beta}(B_{1/4-r_1})$ for any $\beta<\sigma-1$ ($r_1=1/100$),
and
\begin{equation}\label{7B7A}
\|u_{\varepsilon}\|_{C^{1,\beta}(B_{1/4-r_1})}\leq
{C_{\varepsilon}}\Big(\|u\|_{L^{\infty}(\R^n)}+\|f\|_{L^{\infty}(B_{1}\times\R)}\Big).
\end{equation}
Now, to get a bound on higher derivatives, the idea would be to
differentiate~\eqref{fund.eq} and use again  \eqref{BL7} and
\cite[Theorem 61]{CS2011}. However we do not have $C^1$ bounds on
the function $u_{\varepsilon}$ outside
$B_{{1/4-r_1}}$, and therefore we can not apply
directly this strategy to obtain the $C^{2,\alpha}$ regularity of
the function $u_{\varepsilon}$.

To avoid this problem we follow the localization argument in
\cite[Theorem 13.1]{CScpam}: we take $\delta>0$ small (to be chosen)
and we consider a smooth cut-off function
$$
   \vartheta:=\left\{\begin{array}{ll}
    1 &\quad\mbox{in }  B_{1/4-(1+\delta)r_1}, \\
    0 &\quad\mbox{on } \R^n\setminus B_{1/4-r_1},
  \end{array}\right.
$$
and for fixed~$e\in S^{n-1}$ and $|h|<\delta r_1$ we define
\begin{equation}\label{Dv}
v(x):=\frac{u_{\varepsilon}(x+eh)-u_{\varepsilon}(x)}{|h|}.\end{equation}
{The function $v(x)$ is uniformly bounded in $B_{1/4-(1+\delta)r_1}$ because $u\in C^{1}(B_{1/4-r_1})$.}
We now
write~$v(x)=v_{1}(x)+v_{2}(x)$, being
$$v_{1}(x):=\frac{\vartheta u_{\varepsilon}(x+eh)-\vartheta u_{\varepsilon}(x)}{|h|}\quad\mbox{and}\quad v_{2}(x):=\frac{(1-\vartheta)u_{\varepsilon}(x+eh)-(1-\vartheta)u_{\varepsilon}(x)}{|h|}.$$
By \eqref{7B7A} it is clear that
$$
v_{1}\in L^{\infty}(\R^n)
$$
and that (recall that $|h|<\delta r_{1}$)
\begin{equation}\label{v1two}
v_1=v\qquad \mbox{inside $B_{1/4-(1+2\delta)r_1}$.}
\end{equation}
Moreover, for $x\in B_{1/4-(1+2\delta)r_1}$, using \eqref{fund.eq},
\eqref{eq:f eps}, and \eqref{BL7} we get
\begin{eqnarray}
\left|(-\Delta)^{\sigma/2}v_1(x)\right|&=&
\left|(-\Delta)^{\sigma/2}v(x)-(-\Delta)^{\sigma/2}v_{2}(x)\right|\nonumber\\
&=&\left|\frac{g_{{\varepsilon}}(x+eh)-g_{{\varepsilon}}(x)}{|h|}-(-\Delta)^{\sigma/2}v_{2}(x)\right|\nonumber\\
&\leq&
{C_{\varepsilon}}\Big(1+\|u_{\varepsilon}\|_{C^{1}(B_{1/4-r_1})}\Big)+\left|(-\Delta)^{\sigma/2}v_{2}(x)\right|.\label{M+}
\end{eqnarray}
Now, let us denote by
$K_o(y):=\frac{c_{n,\sigma}}{|y|^{n+\sigma}}$ the kernel of the
fractional Laplacian. Since for $x\in B_{1/4-(1+2\delta)r_1}$ and
$|y|<\delta r_{1}$ we have that $(1-\vartheta)u_{\varepsilon}(x\pm
y)=0$, it follows from a change of variable that
\begin{eqnarray*}
|(-\Delta)^{\sigma/2}v_{2}(x)|&\leq&\Big|\int_{\R^n}{(v_{2}(x+y)+v_{2}(x-y)-2v_{2}(x))K_o(y)dy}\Big|\\
&\le&\Big|\int_{\R^n}{\frac{(1-\vartheta)u_{\varepsilon}(x+y+eh)-(1-\vartheta)u_{\varepsilon}(x+y)}{|h|}K_o(y)dy}\Big|\\
&&+\Big|\int_{\R^n}{\frac{(1-\vartheta)u_{\varepsilon}(x-y+eh)-(1-\vartheta)u_{\varepsilon}(x-y)}{|h|}K_o(y)dy}\Big|\\
&\leq&\int_{\R^n}{(1-\vartheta)|u_{\varepsilon}|(x+y)\Big|\frac{K_o(y-eh)-K_o(y)}{|h|}\Big|dy}\\
&&+\int_{\R^n}{(1-\vartheta)|u_{\varepsilon}|(x-y)\Big|\frac{K_o(y-eh)-K_o(y)}{|h|}\Big|dy}\\
&\leq&\|u_{\varepsilon}\|_{L^{\infty}(\R^n)}\int_{\R^n\setminus
B_{{\delta r_1}} }{\frac{1}{|y|^{n+\sigma+1}}dy}\\
&\leq&C_\delta \|u_{\varepsilon}\|_{L^{\infty}(\R^n)}.
\end{eqnarray*}
Therefore, by \eqref{M+} we obtain
$$\big|(-\Delta)^{\sigma/2}v_{1}(x)\big|\leq
{C_{\varepsilon,\delta}}\Big(1+ \|u_{\varepsilon}\|_{C^{1}(B_{1/4-r_1})}
+\|u_{\varepsilon}\|_{L^{\infty}(\R^n)}\Big),\qquad x\in
{B_{1/4-(1+2\delta)r_1}},$$ and we can apply \cite[Theorem 61]{CS2011} to
get
that
$v_{1}\in C^{1,\beta}(B_{1/4-r_2})$ for any $\beta<\sigma-1$, with
$$\|v_{1}\|_{C^{1,\beta}(B_{1/4-r_{2}})}\leq
{C_{\varepsilon}}\Big(1+\|v_{1}\|_{L^{\infty}(\R^n)}+
\|u_{\varepsilon}\|_{C^{1}(B_{1/4}-r_1)}
+\|u_{\varepsilon}\|_{L^{\infty}(\R^n)}\Big),\,$$
provided $\delta>0$ was chosen sufficiently small
so that~{$r_2>(1+2\delta)r_1$}. By \eqref{Dv}, \eqref{v1two}, and
\eqref{7B7A}, this implies that $u_{\varepsilon}\in
C^{2,\beta}(B_{1/4-r_{2}})$, with
\begin{eqnarray*}
\|u_{\varepsilon}\|_{C^{2,\beta}(B_{1/4-r_{2}})}&\leq& {C_{\varepsilon}}\Big(1+
\|u_{\varepsilon}\|_{C^{1}(B_{1/4-r_1})}
+\|u_{\varepsilon}\|_{L^{\infty}(\R^n)}\Big)
\\ &\leq&
{C_\varepsilon}\Big(1 +\|u\|_{L^{\infty}(\R^n)}
+\|f\|_{L^{\infty}(B_{1}\times\R)}\Big).
\end{eqnarray*}
Iterating this argument we obtain~\eqref{LE1}, as desired.

\subsection{Uniform estimates and conclusion of the proof for $k=0$}
\label{section:uniforml} Knowing now that the functions
$u_\varepsilon$ defined by \eqref{20bis} are smooth inside $B_{1/5}$ (see \eqref{LE1}), our goal is
to obtain a-priori bounds independent of $\varepsilon$.

By \cite[Theorem 61]{CS2011} applied\footnote{As already observed in the footnote on
page~\pageref{Uu}, the fact that the kernel satisfies \eqref{Lo}
only for $w$ small is not a problem, and one can easily check that
\cite[Theorem 61]{CS2011} still holds in our setting. }
to $u$, we have that $u\in
C^{1,\beta}(B_{1-R_{1}})$ for any $\beta<\sigma-1$ and~$R_{1}>0$,
with
\begin{equation}\label{Be}
\|u\|_{C^{1,\beta}(B_{1-R_1})}\leq
C\left(\|u\|_{L^{\infty}(\R^n)}+\|f\|_{L^{\infty}(B_{1}\times\R)}\right).
\end{equation}
Then, for any
$\varepsilon$ sufficiently small,~$f_{\varepsilon}\in
C^1(B_{1/2})$ with
\begin{equation}\label{FE}\begin{split} & \|
f_{\varepsilon}\|_{C^1(B_{1/2})}\le C' \left(1+\| u\|_{C^{1}
(B_{1-R_1})}\right)\\
&\qquad\le C' C \left(1+\| u\|_{L^{\infty}(\R^n)}
+\|f\|_{L^{\infty}(B_{1}\times\R)}\right),
\end{split}\end{equation}
where~$C'>0$ depends on~$\|f\|_{C^1(B_{1}\times\R)}$ only.

Consider a cut-off function~$\tilde\eta$
which is $1$ inside $B_{1/7}$ and $0$ outside
$B_{1/6}$.

Then, recalling~\eqref{20bis}, we write the equation satisfied by
$u_{\varepsilon}$ as
\begin{equation*}
f_{\varepsilon}(x)= \int_{\R^{n}} K_{\varepsilon}(x,w)\,\delta
(\tilde\eta u_{\varepsilon})(x,w)dw +\int_{\R^{n}}
K_{\varepsilon}(x,w)\,\delta ((1-\tilde\eta) u_{\varepsilon})(x,w)dw,
\end{equation*}
and by differentiating it, say in direction~$e_1$, we obtain
(recall Lemmata~\ref{D} and~\ref{E})
\begin{eqnarray*}
\partial_{x_1} f_{\varepsilon}(x) &=&
\int_{\R^{n}} K_{\varepsilon}(x,w) \delta (\partial_{x_1}(\tilde\eta
u_{\varepsilon}))(x,w)dw
\\ &&\quad+\int_{\R^{n}} \partial_{x_1} \big[
K_{\varepsilon}(x,w)\delta ((1-\tilde\eta) u_{\varepsilon})(x,w)\big]
dw\\&&\quad +\int_{\R^{n}}
\partial_{x_1} K_{\varepsilon}(x,w)\delta (\tilde\eta u_{\varepsilon})(x,w)dw
\end{eqnarray*}for any~$x\in B_{1/5}$.
It is convenient to rewrite this equation as
$$ \int_{\R^{n}}  K_{\varepsilon}(x,w)
\delta (\partial_{x_1}(\tilde\eta
u_{\varepsilon}))(x,w)dw=A_1-A_2-A_3,$$ with
\begin{eqnarray*}
A_1 &:=& \partial_{x_1} f_{\varepsilon}(x),\\
A_2 &:=& \int_{\R^{n}} \partial_{x_1} K_{\varepsilon}(x,w)\delta (\tilde\eta u_{\varepsilon})(x,w)dw\\
A_3&:=& \int_{\R^{n}} \partial_{x_1} \big[
K_{\varepsilon}(x,w)\delta ((1-\tilde\eta) u_{\varepsilon})(x,w)\big]
dw.
\end{eqnarray*}
We claim that
\begin{equation}\label{A1233}
\|A_1-A_2-A_3\|_{L^\infty(B_{1/14})} \leq C
\left(1+\|u\|_{L^{\infty}(\R^n)}+\|u_{\varepsilon}\|_{C^{2}(B_{1/6})}\right)\end{equation}
with $C$ depending only on $\|f\|_{C^1(B_1\times\R)}$. 
To prove this, we first observe that by~\eqref{FE}
$$
\|A_1\|_{L^\infty(B_{1/14})} \leq C \left(1+
\|u\|_{L^{\infty}(\R^n)}\right).
$$
Also, since $| \partial_{x_1} \hat {K}_{\varepsilon}(x,w)| \leq C|w|^{-(n+\sigma)}$,\footnote{
This can be easily checked using the definition of $\hat {K}_{\varepsilon}$ and \eqref{sm 1}.
Indeed, because of the presence of the term $(1-\eta_{\varepsilon}(w))$
which vanishes for $|w|\leq \varepsilon/2$, one only needs to check that
$$
\int_{\R^n} |w-z|^{-n-\sigma} \hat{\eta}_{\varepsilon^2} (z) \,dz
\leq C|w|^{-n-\sigma} \qquad\text{for } |w|\geq \varepsilon/2,
$$
which is easy to prove (we leave the details to the reader).
}
by~\eqref{A2 estimate} (used with
$\gamma=\lambda:=(1,0,\dots,0)$ and $v:=\tilde\eta u_{\varepsilon}$) we get
$$
\|A_2\|_{L^\infty(B_{1/14})} \leq C\|\tilde\eta u_{\varepsilon}\|_{C^2(\R^n)}
\leq C
\|u_{\varepsilon}\|_{C^2(B_{1/6})},
$$
where we used that $\tilde\eta$ is supported in $B_{1/6}$.

Moreover, since~$(1-\tilde\eta)u_\varepsilon=0$ inside $B_{1/7}$, we can
use \eqref{A3 estimate} with $v:=(1-\tilde\eta)u_{\varepsilon}$ to
obtain
\begin{eqnarray*}
&& \left|\int_{\R^{n}} \partial_{x_1} K_{\varepsilon}(x,w)\,\delta
((1-\tilde\eta) u_{\varepsilon})(x,w)\,dw\right|
\\&&\qquad+
\left|\int_{\R^{n}} K_{\varepsilon}(x,w)\,\partial_{x_1}\delta
((1-\tilde\eta) u_{\varepsilon})(x,w)\, dw\right|
\\ &&\qquad\qquad\le
C\, C_{k}\, \| (1-\tilde\eta)u_{\varepsilon}\|_{L^\infty(\R^n)}
\end{eqnarray*}
for any $x\in B_{1/14}$,
which gives (note that, by an easy comparison principle,
$\|u_\varepsilon\|_{L^\infty(\R^n)} \leq
C(1+\|u\|_{L^\infty(\R^n)})$)
$$\|A_3\|_{L^\infty(B_{1/14})} \leq C(1+\|u\|_{L^\infty(\R^n)}).$$
The above estimates imply \eqref{A1233}.

Since $\partial_{x_1}(\tilde{\eta}u_{\varepsilon})$ is bounded
on the whole of~$\R^n$, by \eqref{A1233}
and~\cite[Theorem~61]{CS2011} we obtain that~$\partial_{x_1}(\tilde{\eta}
u_{\varepsilon})\in C^{1,\beta}(B_{1/14-R_2})$ for any
$R_{2}>0$, with
$$
\|\partial_{x_1}(\tilde{\eta}
u_{\varepsilon})\|_{C^{1,\beta}(B_{1/14-R_2})} \leq C
\left(1+\|u\|_{L^{\infty}(\R^n)}+\|u_{\varepsilon}\|_{C^{2}(B_{1/6})}\right),$$
which implies
\begin{equation}\label{29AA}
\|u_{\varepsilon}\|_{C^{2,\beta}(B_{1/15})} \leq C
\left(1+\|u\|_{L^{\infty}(\R^n)}+\|u_{\varepsilon}\|_{C^{2}(B_{1/6})}\right).
\end{equation}
To end the proof we need to reabsorb the $C^2$-norm on the right
hand side. To do this, we observe that by standard interpolation
inequalities (see for instance \cite[Lemma 6.35]{GT}), for any
$\delta\in(0,1)$ there exists $C_\delta>0$ such that
\begin{equation}\label{29BB}
\|u_{\varepsilon}\|_{C^2(B_{1/6})} \leq \delta
\|u_{\varepsilon}\|_{C^{2,\beta}(B_{1/5})}+C_\delta
\|u_{\varepsilon}\|_{L^\infty(\R^n)}.
\end{equation}
Hence, by~\eqref{29AA} and~\eqref{29BB} we obtain
\begin{equation}\label{29CC}
\|u_{\varepsilon}\|_{C^{2,\beta}(B_{1/15})} \leq C_\delta
(1+\|u\|_{L^\infty(\R^n)})
+C\delta\|u_{\varepsilon}\|_{C^{2,\beta}(B_{1/5})}.
\end{equation}
To conclude, one needs to apply the above estimates at every point
inside $B_{1/5}$ at every scale: for any $x \in B_{1/5}$, let $r>0$ be any
radius such that $B_r(x)\subset B_{1/5}$. Then we consider
\begin{equation}\label{sc}
v_{\varepsilon,r}^x(y):=u_{\varepsilon}(x+ry),
\end{equation}
and we observe that $v_{\varepsilon,r}^x$ solves an analogous
equation as the one solved by~$u_{\varepsilon}$ with the kernel
given by
$$K^{x}_{\varepsilon,r}(y,z):=r^{n+\sigma}K_{\varepsilon}(x+ry,rz)$$
and with right hand side
$$F_{\varepsilon,r}(y):=r^{\sigma}\int_{\R^n}{f(x+ry-\tilde{x},u(x+ry-\tilde{x}))\hat{\eta}_{\varepsilon}(\tilde{x})d\tilde{x}}.$$
We now observe that the kernels $K^{x}_{\varepsilon,r}$ satisfy assumptions \eqref{ass silv} and \eqref{sm 1} uniformly with
respect to $\varepsilon$, $r$, and $x$. Moreover, for  $|x|+r\leq
1/5$, and $\varepsilon$  small, we have
$$\|F_{\varepsilon,r}\|_{C^{1}(B_{1/2})}\leq r^{\sigma}C(1+\|u\|_{C^{1}(B_{3/4})}), $$
with $C>0$ depending on $\|f\|_{C^{1}(B_{1}\times\R)}$ only.
Hence, by \eqref{Be} this implies
$$\|F_{\varepsilon,r}\|_{C^{1}(B_{1/2})}\leq r^{\sigma}C\left(1+\|u\|_{L^{\infty}(\R^n)}+\|f\|_{L^{\infty}(B_{1}\times\R)}\right).$$
Thus, applying~\eqref{29CC} to $v_{\varepsilon,r}^x$ (by the
discussion we just made, the constants are all independent of
$\varepsilon$, $r$, and $x$) and scaling back, we get
\begin{eqnarray*}
\|u_{\varepsilon}\|^*_{C^{2,\beta}(B_{r/15}(x))} \leq
C_\delta
\left(1+\|u\|_{L^\infty(\R^n)}+\|f\|_{L^{\infty}(B_{1}\times\R)}\right)
+C\delta
\|u_{\varepsilon}\|^*_{C^{2,\beta}(B_{r/5}(x))}.
\end{eqnarray*}
Using now Lemma~\ref{Co2} inside $B_{1/5}$ with $\mu=1/15$, $\nu=1/5$, $m=2$, and $\Lambda_\delta=C_\delta
(1+\|u\|_{L^\infty(\R^n)}+\|f\|_{L^{\infty}(B_{1}\times\R)})$, we
conclude (observe that $1/15 \cdot 1/5=1/75$)
$$ \|u_{\varepsilon}\|_{C^{2,\beta}(B_{1/75})}\le  C\left(1+\|u\|_{L^\infty(\R^n)}+\|f\|_{L^{\infty}(B_{1}\times\R)}\right),$$
which implies
$$ \|u\|_{C^{2,\beta}(B_{1/75})}\le  C\left(1+\|u\|_{L^\infty(\R^n)}+\|f\|_{L^{\infty}(B_{1}\times\R)}\right)$$
by letting $\varepsilon \to 0$ (see \eqref{Uu}). Since
$\beta<\sigma-1$, this is equivalent to
$$ \|u\|_{C^{\sigma+\alpha}(B_{1/75})}\le  C
\left(1+\|u\|_{L^\infty(\R^n)}+\|f\|_{L^{\infty}(B_{1}\times\R)}\right),\,\quad\mbox{for any $\alpha<1$}.$$
A standard covering/rescaling argument completes the proof of
Theorem~\ref{boot} in the case $k=0$.

\subsection{The induction argument.}

We already proved Theorem~\ref{boot} in the case $k=0$.

We now show by induction that, for any $k \geq 1$,
\begin{equation}\label{induction}
\| u\|_{C^{k+\sigma+\alpha}(B_{1/2^{{3k+4}}})} \le C_k
\left(1+\|u\|_{L^\infty(\R^n)}+\|f\|_{L^{\infty}(B_{1}\times\R)}\right) ,
\end{equation}
for some constant~$C_k>0$: by a standard covering/rescaling
argument, this proves~\eqref{3bis} and so Theorem~\ref{boot}.
As we shall see, the argument is more or less identical to the case $k=0$.
To be fully rigorous, we should apply the regularization argument with the functions $u_\varepsilon$
as done in the previous step.
However, to simplify the notation and make the argument more transparent, we will skip the
regularization.

Define $g(x):=f(x,u(x))$, and consider a cut-off function~$\tilde\eta$
which is $1$ inside $B_{1/2^{3k+5}}$ and $0$ outside
$B_{1/2^{3k+4}}$.

By Lemmata~\ref{D} and~\ref{E} we differentiate the equation~$k+1$
times according to the following computation: first we observe
that, {since \eqref{induction} is true for $k-1$ and we can choose $\alpha \in (2-\sigma,1)$
so that $\sigma+\alpha>2$}, we deduce that $g\in C^{k+{1}}(B_{1/2^{3k+4}})$ with
\begin{equation}\label{gg}
 \|g\|_{C^{k+{1}}(B_{1/2^{3k+4}})}\leq C\left(1+ \| u\|_{C^{k+{1}}
(B_{1/2^{3k+4}})}\right){\leq C\left(\|u\|_{L^{\infty}(\R^n)}+\|f\|_{L^{\infty}(B_1\times\R^n)}\right)},
\end{equation}
with~$C>0$ depending on~$\|f\|_{C^{k+{1}}(B_{1}\times\R)}$ only. Now
we take~$\gamma\in\N^n$ with~$|\gamma|=k+{1}$ and we differentiate
the equation to obtain
\begin{eqnarray*}&& \partial^\gamma g(x)
\\ &=&
\sum_{{{1\le i\le n}\atop{0\le \lambda_i\le
\gamma_i}}\atop{\lambda=(\lambda_1,\dots,\lambda_n)}} \left(
{\gamma_1}\atop{\lambda_1}\right)\dots \left(
{\gamma_n}\atop{\lambda_n}\right) \int_{\R^n} \partial^{\lambda}_x
K(x,w)\,\delta (\partial^{\gamma-\lambda}_x (\tilde\eta u))(x,w)\,dw
\\ &+&
\sum_{{{1\le i\le n}\atop{0\le \lambda_i\le
\gamma_i}}\atop{\lambda=(\lambda_1,\dots,\lambda_n)}} \left(
{\gamma_1}\atop{\lambda_1}\right)\dots \left(
{\gamma_n}\atop{\lambda_n}\right) \int_{\R^n} \partial^{\lambda}_x
K(x,w)\,\delta (\partial^{\gamma-\lambda}_x (1-\tilde\eta)u)(x,w)\,dw
.\end{eqnarray*} Then, we isolate the term with~$\lambda=0$ in the
first sum:
$$
\int_{\R^n} K(x,w)\, \delta (\partial^\gamma_x(\tilde\eta u)) (x,w)\,dw
= A_1-A_2-A_3
$$
with
\begin{eqnarray*}
A_1 &:=& \partial^\gamma g(x),\\
A_2 &:=& \sum_{{{1\le i\le n}\atop{0\le \lambda_i\le
\gamma_i}}\atop{\lambda=(\lambda_1,\dots,\lambda_n) \ne0}} \left(
{\gamma_1}\atop{\lambda_1}\right)\dots \left(
{\gamma_n}\atop{\lambda_n}\right) \int_{\R^n} \partial^{\lambda}_x
K(x,w)\,\delta
(\partial^{\gamma-\lambda}_x (\tilde\eta u))(x,w)\,dw\\
A_3&:=& \sum_{{{1\le i\le n}\atop{0\le \lambda_i\le
\gamma_i}}\atop{\lambda=(\lambda_1,\dots,\lambda_n)}} \left(
{\gamma_1}\atop{\lambda_1}\right)\dots \left(
{\gamma_n}\atop{\lambda_n}\right) \int_{\R^n} \partial^{\lambda}_x
K(x,w)\,\delta (\partial^{\gamma-\lambda}_x (1-\tilde\eta)u)(x,w)\,dw
\end{eqnarray*}
We claim that
\begin{equation}\label{R}
\|A_1-A_2-A_3\|_{L^\infty(B_{1/2^{3k+6}})} \leq C
\left(1+\|u\|_{L^\infty(\R^n)}
+\|u\|_{C^{k+{2}}(B_{1/2^{3k+4}})}\right),
\end{equation}
Indeed, by the fact that~$|\gamma-\lambda|\le k$ we see that
\begin{equation}\label{gg2}
\begin{split}
\|A_2\|_{L^\infty(B_{1/2^{3k+6}})} 
& \leq C \, C_{k} \,\| \tilde\eta u\|_{C^{k+{2}} (\R^n)}\\
& \leq C \, C_{k} \,\| u\|_{C^{k+{2}} (B_{1/2^{3k+4}})}.
\end{split}\end{equation}
Furthermore, since $(1-\tilde\eta)u=0$ inside $B_{1/2^{3k+5}}$, we can
use \eqref{A3 estimate} with $v:=(1-\tilde\eta)u$ to obtain
$$ \|A_3\|_{L^\infty(B_{1/2^{3k+6}})}\le C \|u\|_{L^\infty(\R^n)}.$$
This last estimate, \eqref{gg}, and \eqref{gg2} allow us to
conclude the validity of~\eqref{R}.

Now, by~\cite[Theorem~61]{CS2011} applied to
$\partial^\gamma_x(\tilde\eta u)$ we get
$$
\|u\|_{C^{\sigma+k+\alpha}(B_{1/2^{3k+7}})} \leq C
\left(1+\|u\|_{C^{k+{2}}(B_{1/2^{3k+4}})}+\|u\|_{L^\infty(\R^n)}\right),$$
which is the analogous of \eqref{29AA} with $\sigma+\alpha=2+\beta$.
Hence, arguing as in the case $k=0$ (see the argument after \eqref{29AA}) we conclude that
$$ \|u\|_{C^{\sigma+k+\alpha}(B_{1/2^{{3(2k+1)+5}}})}\le
C\left(1+\|u\|_{L^\infty(\R^n)}+\|f\|_{L^{\infty}(B_{1}\times\R)}\right).$$
A covering argument shows the validity of \eqref{induction},
comcluding the proof of Theorem~\ref{boot}.

\section{Proof of Theorem~\ref{main}}
\label{section:main}

The idea of the proof is to write the fractional minimal surface
equation in a suitable form so that we can apply
Theorem~\ref{boot}.

\subsection{Writing the operator on the graph of~$u$}

The first step in our proof consists in writing the $s$-minimal
surface functional in terms of the function $u$ which (locally)
parameterizes the boundary of a set $E$. More precisely, we assume
that $u$ parameterizes $\partial E \cap K_R$ and that (without
loss of generality) $E \cap K_R$ is contained in the ipograph of
$u$. Moreover, since by assumption $u(0)=0$ and $u$ is of class
$C^{1,\alpha}$, up to rotating the system of coordinates (so that
$\nabla u(0)=0$) and reducing the size of $R$, we can also assume
that
\begin{equation}
\label{eq:small Lip}
\partial E \cap K_R\subset B_R^{n-1}\times [-R/8,R/8].
\end{equation}
Let $\varphi\in C^\infty(\R)$ be an even function satisfying
$$
   \varphi(t)=\left\{\begin{array}{ll}
    1 &\quad\mbox{if }  |t|\leq 1/4, \\
    0 &\quad\mbox{if } |t|\geq 1/2,
  \end{array}\right.
$$
and define the smooth cut-off functions
$$
\zeta_R(x'):=\varphi(|x'|/R)\qquad\eta_R(x):= \varphi(|x'|/R)
\varphi(|x_n|/R).
$$
Observe that
$$
\zeta_R= 1 \quad \text{in }B_{R/4}^{n-1},\qquad \zeta_R= 0\quad
\text{outside }B_{R/2}^{n-1},
$$
$$
\eta_R= 1\quad \text{in } K_{R/4},\qquad \eta_R= 0 \quad
\text{outside } K_{R/2}.
$$
We claim that, for any~$x\in\partial E \cap \left(B_{R/2}^{n-1}\times [-R/8, R/8]\right)$,
\begin{equation}\label{LHS-1}\begin{split}
& \int_{\R^{n}}\eta_R(y-x)\frac{\chi_E(y)-\chi_{\CC E}
(y)}{|x-y|^{n+s}}\,dy
\\ &\qquad=2\int_{\R^{n-1}}
F\left( \frac{u(x'-w')-u(x')}{|w'|}\right)
\frac{\zeta_R(w')}{|w'|^{n-1+s}}\,dw',\end{split}
\end{equation}
where
$$
F(t):=\int_0^t\frac{d\tau}{(1+\tau^2)^{(n+s)/2}}.
$$
Indeed, writing $y=x-w$ we have (observe that $\eta_R$ is even)
\begin{eqnarray}\label{JW}
\nonumber &&\int_{\R^n}\eta_R(y-x)\frac{\chi_E(y)-\chi_{\CC E}
(y)}{|x-y|^{n+s}}\,dy
\\ &=&\int_{\R^n}\eta_R(w)\frac{\chi_E(x-w)-\chi_{\CC E} (x-w)}{|w|^{n+s}}\,dw
\\ &=&\int_{\R^{n-1}}\zeta_R(w') \biggl[
\int_{-R/4}^{R/4} \frac{\chi_E (x-w)-\chi_{\CC E}(x-w)}{\Big(
1+(w_n/|w'|)^2\Big)^{(n+s)/2}}\,dw_n
\biggr]\frac{dw'}{|w'|^{n+s}}, \nonumber\end{eqnarray} where the
last equality follows from the fact that $\varphi(|w_n|/R)= 1$ for
$|w_n| \leq R/4$,
and that by \eqref{eq:small Lip} and by
symmetry the contributions of $\chi_E (x-w)$ and $\chi_{\CC
E}(x-w)$ outside $\{|w_n|\leq R/4\}$ cancel each other.

We now compute the inner integral: using the change
variable~$t:=w_n/|w'|$ we have
\begin{eqnarray*}
&&\int_{-R/4}^{R/4}\frac{\chi_{E} (x-w)}{\Big(
1+(w_n/|w'|)^2\Big)^{(n+s)/2}}\,dw_n
\\ &=&
\int_{u(x')-u(x'-w')}^{R/4}\frac{1}{\Big(
1+(w_n/|w'|)^2\Big)^{(n+s)/2}}\,dw_n
\\ &=& |w'| \int_{(u(x')-u(x'-w'))/|w'|}^{R/(4|w'|)}\frac{1}{\big(
1+t^2\big)^{(n+s)/2}}\,dt\\ &=& |w'| \biggl[ F\left(
\frac{R}{4|w|'}\right)-F\left(\frac{u(x')-u(x'-w')}{|w|'}\right)\biggr]
.\end{eqnarray*} In the same way,
\begin{eqnarray*}
&& \int_{-R/4}^{R/4}\frac{\chi_{\CC E} (x-w)}{\Big(
1+(w_n/|w'|)^2\Big)^{(n+s)/2}}\,dw_n \\ &&\qquad=|w'| \biggl[
F\left(\frac{u(x')-u(x'-w')}{|w|'}\right) -F\left(-
\frac{R}{4|w'|}\right)\biggr] .\end{eqnarray*} Therefore,
since~$F$ is odd, we immediately get that
\begin{eqnarray*}
\int_{-R/4}^{R/4}\frac{\chi_E (x-w)-\chi_{\CC E}(x-w)}{\Big(
1+(w_n/|w'|)^2\Big)^{(n+s)/2}}\,dw_n = 2|w'| F\left(
\frac{u(x'-w')-u(x')}{|w'|}\right),\end{eqnarray*} which together
with~\eqref{JW} proves~\eqref{LHS-1}.

Let us point out that to justify these computations in a pointwise
fashion one would need $u \in C^{1,1}(x)$ (in the sense of
\cite[Definition 3.1]{BCF2}). However, by using the viscosity
definition it is immediate to check that \eqref{LHS-1} holds in
the viscosity sense (since one only needs to verify it at points
where the graph of $u$ can be touched with paraboloids).

\subsection{The right hand side of the equation}

Let us define the function
\begin{equation}\label{smooth0}
\Psi_R(x):=
\int_{\R^{n}}\left[1-\eta_R(y-x)\right]\frac{\chi_E(y)-\chi_{\CC
E} (y)}{|x-y|^{n+s}}\,dy.
\end{equation}
Since $1-\eta_R(y-x)$ vanishes in a neighborhood of $\{x=y\}$, it
is immediate to check that the function $ \psi_R(z):=
\displaystyle\frac{1-\eta_R(z)}{|z|^{n+s}}$ is of class
$C^\infty$, with
$$
|\partial^\alpha \psi_R(z)| \leq
\frac{C_{|\alpha|}}{1+|z|^{n+s}}\qquad \forall\,\alpha\in \N^n.
$$
Hence, since $1/(1+|z|^{n+s}) \in L^1(\R^n)$ we deduce that
\begin{equation}\label{R7}
{\mbox{$\Psi_R \in C^\infty(\R^n)$, with all its derivatives
uniformly bounded.}}
\end{equation}

\subsection{An equation for~$u$ and conclusion}

By~\cite[Theorem 5.1]{CRS} we have that the equation
$$ \int_{\R^n} \frac{\chi_E(y)-\chi_{\CC E}(y)}{|x-y|^{n+s}}\,dy=0$$
holds in viscosity sense  for any~$x\in (\partial E)\cap K_R$.
Consequently, by~\eqref{LHS-1} and~\eqref{smooth0} we deduce that
$u$ is a viscosity solution of
$$
\int_{\R^{n-1}} F\left( \frac{u(x'-w')-u(x')}{|w'|}\right)
\frac{\zeta_R(w')}{|w'|^{n-1+s}}\,dw'=-\frac{\Psi_R(x',u(x'))}{2}
$$
inside $B_{R/2}^{n-1}$.
Since $F$ is odd, we can add the term $F\left(-\nabla u(x')\cdot \frac{w'}{|w'|}\right)$ inside the integral in the left hand side (since it integrates to zero), so the equation actually becomes 
\begin{multline}\label{eq1}
\int_{\R^{n-1}} \biggl[F\left( \frac{u(x'-w')-u(x')}{|w'|}\right)-F\left(-\nabla u(x')\cdot \frac{w'}{|w'|}\right)\biggr]
\frac{\zeta_R(w')}{|w'|^{n-1+s}}\,dw'\\
=-\frac{\Psi_R(x',u(x'))}{2}.
\end{multline}
We would like to apply the regularity
result from Theorem~\ref{boot2}, exploiting~\eqref{R7} to bound the
right hand side of~\eqref{eq1}. To this aim, using the Fundamental
Theorem of Calculus, we rewrite the left hand side in \eqref{eq1}
as
\begin{equation}
\label{eq:rewrite lhs} \int_{\mathbb{R}^{n-1}}{\bigl(u(x'-w')-u(x')+\nabla u(x')\cdot w'\bigr)
\frac{a(x',-w')\zeta_R(w')}{|w'|^{n+s}}}\,dw',
\end{equation}
where
$$
a(x',-w'):=\int_{0}^{1}\biggl(1+\left(t\frac{u(x'-w')-u(x')}{|w'|}-(1-t)\nabla u(x')\cdot \frac{w'}{|w'|}\right)^{2}\biggr)^{-({n+s})/{2}}\,dt.
$$
Now, we claim that
\begin{equation}\label{eq2}
\begin{split}
\int_{\mathbb{R}^{n-1}}\delta u(x',w')\, K_R(x',w')\,dw'
=-\Psi_R(x',u(x')) +A_R(x'),
\end{split}
\end{equation}
where
$$
K_R(x',w'):=
\frac{[a(x',w')+a(x',-w')]\zeta_R(w')}{2|w'|^{(n-1)+(1+s)}}
$$
and
$$
A_R(x'):=\int_{\mathbb{R}^{n-1}}[u(x'-w')-u(x')+\nabla u(x')\cdot
w'] \frac{[a(x',w')-a(x',-w')]\zeta_R(w')}{|w'|^{n+s}}\,dw'
$$
To prove~\eqref{eq2}
we introduce a short-hand notation:
we define
$$
u^{\pm}(x',w'):=u(x'\pm w')-u(x')\mp \nabla u(x')\cdot w',\qquad
a^\pm(x',w'):= a(x', \pm w')\frac{\zeta_R (w')}{|w'|^{n+s}},$$
while the integration over~${\mathbb{R}^{n-1}}$, possibly in the principal value sense, will be denoted by~$I[\cdot]$.
With this notation, and recalling~\eqref{eq:rewrite lhs}, it follows that
\eqref{eq1} can be written
\begin{equation}\label{eq:rewrite lhs.2}
-\frac{\Psi_R}2= I[u^- a^-].\end{equation}
By changing $w'$ with $-w'$ in the integral given by~$I$,
we see that
$$ I[u^+ a^+]=I[u^- a^-],$$
consequently~\eqref{eq:rewrite lhs.2} can be rewritten as
\begin{equation}
\label{eq:rewrite lhs2}
-\frac{\Psi_R}2= I[u^+ a^+].
\end{equation}
Notice also that
\begin{equation}\label{extra}
u^+ + u^- = \delta u, \qquad I[u^+(a^+-a^-)]=I[u^-(a^--a^+)].\end{equation}
{Hence,} adding~\eqref{eq:rewrite lhs.2} and~\eqref{eq:rewrite lhs2}, and {using \eqref{extra}},
we obtain
{
\begin{eqnarray*}
-\Psi_R &=& I[u^+ a^+]+I[u^- a^-]
\\ &=& \frac{1}{2}I[(u^+ + u^-) (a^++a^-)]+\frac{1}{2}I[(u^+ - u^-) (a^+-a^-)]
\\ &=& \frac{1}{2}I[\delta u\, (a^++a^-)]+\frac{1}{2}I[(u^+ - u^-) (a^+-a^-)]
\\ &=& \frac{1}{2}I[\delta u\, (a^++a^-)]-I[u^- (a^+-a^-)],
\end{eqnarray*}
}
which proves~\eqref{eq2}.

Now, to conclude the proof of Theorem~\ref{main}
it suffices to apply Theorem~\ref{boot2} iteratively: more precisely, let us start by assuming that
$u \in
C^{1,\beta}(B_{2r}^{n-1})$ for some $r\leq R/2$ and any $\beta<s$. Then, by the discussion above we get that $u$ solves
$$
\int_{\mathbb{R}^{n-1}}\delta u(x',w')\, K_{r}(x',w')\,dw'
=-\Psi_r(x',u(x')) +A_r(x)  \qquad \text{in }B_r^{n-1}.
$$
Moreover, one can easily check that the regularity of $u$ implies
that the assumptions of Theorem \ref{boot2} {with $k=0$} are satisfied with
$\sigma:=1+s$ and $a_0:= 1/(1-s)$. (Observe that \eqref{new
condition} holds since~$\|u\|_{C^{1,\beta}(B^{n-1}_{2r})}$.)
Furthermore, it is not difficult to check that,
for $|w'|\leq 1$,
$$
\left|[u(x'-w')-u(x')+\nabla u(x')\cdot
w']\,[a(x',w')-a(x',-w')]\right| \leq C|w'|^{2\beta+1},
$$
which implies that the integral defining $A_r$ is convergent by choosing
$\beta>s/2$.
Furthermore, a tedious computation (which we postpone to Subsection \ref{EEE} below)
shows that \begin{equation}\label{END} A_r \in
C^{2\beta-s}(B_{r}^{n-1}).\end{equation}

Hence, by Theorem \ref{boot2} with $k=0$ we deduce that $u \in
C^{1,2\beta}(B^{n-1}_{r/2})$.
But then this implies that $A_{r}\in
C^{4\beta-s}(B_{r/4}^{n-1})$ and so by Theorem \ref{boot2} again $u \in
C^{1,4\beta}(B^{n-1}_{r/8})$ for all $\beta <s$.
Iterating this argument infinitely many times\footnote{Note that,
once we know that $\|u\|_{C^{k,\beta}(B^{n-1}_{2r})}$ is bounded for some $k \geq 2$ and $\beta \in (0,1]$,
for any $|\gamma|\leq k-1$ we get
$$
\partial_x^\gamma A_r(x)=\int_{\mathbb{R}^{n-1}}
\partial_x^\gamma\bigl([u(x'-w')-u(x')+\nabla u(x')\cdot
w']\,[a(x',w')-a(x',-w')]\bigr)
\frac{\zeta_r(w')}{|w'|^{n+s}}\,dw',
$$
and exactly as in the case $k=0$ one shows that
$$
\left|\partial_x^\gamma\bigl([u(x'-w')-u(x')+\nabla u(x')\cdot
w']\,[a(x',w')-a(x',-w')]\bigr)\right| \leq C|w'|^{2\beta+1}\qquad \forall\,|w'|\leq 1,
$$
and that $A_r \in C^{k-1,2\beta-s}(B_{r}^{n-1})$.} 
we get that $u \in
C^{m}(B^{n-1}_{\lambda^mr})$ for some $\lambda>0$ small, for any $m\in\mathbb{N}$. Then, by a simple covering argument we obtain that $u \in
C^{m}(B^{n-1}_{\rho})$ for any $\rho<R$ and $m\in\mathbb{N}$, that is, $u$ is of class $C^{\infty}$ inside $B_{\rho}$ for any $\rho<R$. This completes the proof of
Theorem \ref{main}.

\subsection{H\"older regularity of $A_{R}$.}
\label{EEE}
We now prove \eqref{END},
i.e., if $u\in C^{1,\beta}(B^{n-1}_{2r})$ then $A_r\in C^{2\beta-s}(B^{n-1}_{r})$
($r\leq R/2$). For this we introduce the following notation:

$$U(x',w'):=u(x'-w')-u(x')+\nabla u(x')\cdot w'$$
and
$$p(\tau):=\frac{1}{ (1+\tau^2)^{\frac{n+s}{2}} }.$$
In this way we can write
\begin{equation}\label{57a}
a(x',\textcolor{red}{-}w')=\int_{0}^{1}{p\biggl(t\frac{u(x'-w')-u(x')}{|w'|}
-(1-t)\nabla u(x')\cdot \frac{w'}{|w'|}\biggr)\,dt}.
\end{equation}
Let us define
$$\mathcal{A}(x',w'):=a(x',w')-a(x',-w').$$
Then we have
$$A_r(x')=\int_{\R^{n-1}}{U(x',w')\frac{\mathcal{A}(x',w')}{|w'|^{n+s}}\zeta_r(w')\,dw'}.$$
To prove the desired H\"older condition of the function $A_r(x')$,  we first note that
$$U(x',w')=\int_0^1\bigl[\nabla u(x')-\nabla u(x'-tw')\bigr] dt\,\cdot w'.$$
Since $u\in C^{1,\beta}(B_{R}^{n-1})$ and $2r\leq R$, we get
\begin{equation}\label{fin1}
|U(x',w')-U(y',w')|\leq C\,
\min\{|x'-y'|^{\beta}|w'|,|w'|^{\beta+1}\},\quad\mbox{for $y'\in B_{r}^{n-1}$}
\end{equation}
and
\begin{equation}\label{fin2}
 |U(x',w')|\leq C|w'|^{\beta+1}.
\end{equation}
Therefore, from \eqref{fin1} and \eqref{fin2} it follows that,
for any $y'\in B_{r}^{n-1}$,
\begin{eqnarray}
 |A_{r}(x')-A_{r}(y')|&=&\bigg|\int_{\R^{n-1}}{\big(U(x',w')
\mathcal{A}(x',w')-U(y',w')\mathcal{A}(y',w')\big)\frac{\zeta_r(w')}{|w'|^{n+s}}\,dw'}\bigg|\nonumber\\
&\leq&C\int_{\R^{n-1}}{\min\{|x'-y'|^{\beta}|w'|,|w'|^{\beta+1}\}
\frac{|\mathcal{A}(x',w')|}{|w'|^{n+s}}\zeta_r(w')\,dw'}\nonumber\\
&+&C\int_{\R^{n-1}}{|w'|^{\beta+1}
\frac{|\mathcal{A}(x',w')-\mathcal{A}(y',w')|}{|w'|^{n+s}}\zeta_r(w')\,dw'}\nonumber\\
&=:&I_1(x',y')+I_2(x',y')\label{fin3}.
\end{eqnarray}
To estimate the last two integrals we define
$$\mathcal{A_*}(x',w'):=
a(x',w')-p\biggl(\nabla u(x')\cdot\frac{w'}{|w'|}\biggr).$$
With this notation
\begin{equation}\label{fin4}
 \mathcal{A}(x',w')=\mathcal{A_*}(x',w')-\mathcal{A_*}(x',-w').
\end{equation}
By \eqref{57a}
and \eqref{fin2}, since $|p'(t)| \leq C$ and $p$ is even, it follows that
\begin{eqnarray}\nonumber
&&|\mathcal{A_*}(x',-w')|\\
&\leq&\int_0^1\int_0^1 \biggl|\frac{d}{d\lambda} p\biggl(\lambda t\frac{u(x'-w')-u(x')}{|w'|}
-[\lambda (1-t)+(1-\lambda)]\nabla u(x')\cdot\frac{ w'}{|w'|}\biggr)\biggr|\,d\lambda\,dt\nonumber\\
&\leq &\int_0^1{t\frac{|U(x',w')|}{|w'|}
\biggl(\int_0^1{\bigg|p'\bigg(\lambda t\frac{U(x',w')}{|w'|}-\nabla u(x')\cdot\frac{w'}{|w'|}\bigg)\bigg|\,d\lambda}\biggr)\,dt}\nonumber\\
&\leq& C|w'|^{\beta}\label{fin5}
\end{eqnarray}
for all $|w'|\leq r$.

Estimating $\mathcal{A_*}(x',w')$ in the same way, by \eqref{fin4} and \eqref{fin5}, we get, for any $\beta>s/2$,
\begin{eqnarray}
 I_1(x',y')&\leq&C\int_{\R^{n-1}}{\min\{|x'-y'|^{\beta}|w'|,|w'|^{\beta+1}\}|w'|^{\beta-n-s}\zeta_r(w')\,dw'}\nonumber\\
&\le&C|x'-y'|^{\beta}\int_{|x'-y'|}^{r}{t^{\beta-s-1}dt}+\int_{0}^{|x'-y'|}{t^{2\beta-s-1}\,dt}\nonumber\\
&\le&C|x'-y'|^{2\beta-s}.\label{fin6}
\end{eqnarray}
On the other hand, to estimate $I_2$ we note that
\begin{eqnarray}
|\mathcal{A}(x',w')-\mathcal{A}(y',w')|&\leq&|\mathcal{A_*}(x',w')-\mathcal{A_*}(y',w')|\nonumber\\
&+&|\mathcal{A_*}(y',-w')-\mathcal{A_*}(x',-w')|.\label{fin7}
\end{eqnarray}
Hence, arguing as in \eqref{fin5} we have
\begin{eqnarray}
 &&|\mathcal{A_*}(x',-w')-\mathcal{A_*}(y',-w')|\nonumber\\
&\leq &
\int_0^1t\frac{|U(x',w')|}{|w'|}\int_0^1\bigg|p'\bigg(\lambda t\frac{U(x',w')}{|w'|}-\nabla u(x')\cdot\frac{w'}{|w'|}\bigg)\nonumber\\
&&\qquad\qquad\qquad\qquad\qquad\qquad -p'\bigg(\lambda t\frac{U(y',w')}{|w'|}
-\nabla u(y')\cdot\frac{w'}{|w'|}\bigg)\bigg|\,d\lambda\,dt\nonumber\\
&+&\int_0^1{t\frac{|U(x',w')-U(y',w')|}{|w'|}\int_0^1
{\bigg|p'\bigg(\lambda t\frac{U(y',w')}{|w'|}-\nabla u(y')\cdot\frac{w'}{|w'|}\bigg)\bigg|\,d\lambda}\,dt}\nonumber\\
&=:&I_{2,1}(x',y')+I_{2,2}(x',y').\label{fin8}
\end{eqnarray}
We bound each of these integrals separately. First, since $|p'(t)|\leq C$,
it follows immediately  from \eqref{fin1} that
\begin{eqnarray}
 I_{2,2}(x',y')\leq
C\min\{|x'-y'|^{\beta},|w'|^{\beta}\}.\label{fin9}
\end{eqnarray}
On the other hand, by \eqref{fin2}, \eqref{fin1}, and the fact that $u\in C^{1,\beta}(B_{R}^{n-1})$ and $p'$ is uniformly Lipschitz,
we get
\begin{eqnarray}
I_{2,1}(x',y')&\leq& C|w'|^{\beta}\bigg(\frac{|U(x',w')-U(y',w')|}{|w'|}+|\nabla u(x')-\nabla u(y')|\bigg)\nonumber\\
&\leq&C|w'|^{\beta}\Big(
\min\{|x'-y'|^{\beta},|w'|^{\beta}\}+|x'-y'|^{\beta}\Big)\nonumber\\
&\leq&C|w'|^{\beta}|x'-y'|^{\beta}.\label{fin10}
\end{eqnarray}
Then, assuming without loss of generality $r \leq 1$ (so that also $|x'-y'|\leq 1$),
 by \eqref{fin8}, \eqref{fin9}, and \eqref{fin10} it follows that
\begin{eqnarray}\nonumber
|\mathcal{A_*}(x',-w')-\mathcal{A_*}(y',-w')|&\leq&
C\bigg(\min\{|x'-y'|^{\beta},|w'|^{\beta}\}+|w'|^{\beta}|x'-y'|^{\beta}\bigg)
\\\label{fin11}
&\leq& C\min\{|x'-y'|^{\beta},|w'|^{\beta}\}.
\end{eqnarray}
As $|\mathcal{A_*}(y',w')-\mathcal{A_*}(x',w')|$ is bounded
in the same way, by \eqref{fin7}, we have
$$|\mathcal{A}(x',w')-\mathcal{A}(y',w')|\leq C\min\{|x'-y'|^{\beta},|w'|^{\beta}\}.$$
By arguing as in \eqref{fin6}, we get that, for any $s/2<\beta<s$,
\begin{eqnarray}
I_{2}(x',y')
&\leq &C\int_{\R^{n-1}}{|w'|^{\beta+1}\frac{\min\{|x'-y'|^{\beta},|w'|^{\beta}\}}{|w'|^{n+s}}\zeta_r(w')dw'}\nonumber\\
&\leq& C|x'-y'|^{2\beta-s}.\label{fin12}
\end{eqnarray}
Finally, by \eqref{fin3}, \eqref{fin6} and \eqref{fin12}, we conclude that
$$|A_{r}(x')-A_{r}(y')|\leq C|x'-y'|^{2\beta-s},\quad y'\in B_{r}^{n-1},$$
as desired.


\begin{thebibliography}{10000}

\bibitem{ADM} {\sc L. Ambrosio, G. De Philippis
and L. Martinazzi}, {\em Gamma-convergence of nonlocal perimeter
functionals}, Manuscripta Math. {\bf 134} (2011), no. 3-4,
377--403.

\bibitem{BCF} {\sc C. Bjorland, L. Caffarelli and A. Figalli},
{\em Non-Local Tug-of-War and the Infinity Fractional Laplacian},
Comm. Pure Applied Math. {\bf 65} (2012),  no. 3, 337--380.


\bibitem{BCF2} {\sc C. Bjorland, L. Caffarelli and A. Figalli},
{\em Non-Local Gradient Dependent Operators}, 
Adv. Math. {\bf 230} 2012, no. 4--6, 1859--1894.


\bibitem{CRS} {\sc L. Caffarelli, J.-M. Roquejoffre and O. Savin},
{\em Nonlocal minimal surfaces}, Comm. Pure Appl. Math. {\bf 63}
(2010), no. 9, 1111--1144.

\bibitem{CScpam} {\sc L. Caffarelli and L. Silvestre},
{\em Regularity theory for fully nonlinear integro-differential
equations.} Comm. Pure Appl. Math. {\bf 62}  (2009),  no. 5,
597--638

\bibitem{CS2011} {\sc L. Caffarelli and L. Silvestre},
{\em Regularity results for nonlocal equations by approximation},
Arch. Rational Mech. Anal. {\bf 200} (2011), no. 1, 59--88.

\bibitem{CS2012} {\sc L. Caffarelli and L. Silvestre},
{\em The Evans-Krylov theorem for non local fully non linear
equations}, Ann. of Math., {\bf 174} 2011, no. 2, 1163--1187.

\bibitem{Sou} {\sc L. Caffarelli and P. E. Souganidis},
{\em Convergence of nonlocal threshold dynamics approximations to
front propagation}, Arch. Ration. Mech. Anal. {\bf 195} (2010),
no. 1, 1--23.

\bibitem{CV1} {\sc L. Caffarelli and E. Valdinoci},
{\em Uniform estimates and limiting arguments for nonlocal minimal
surfaces}, Calc. Var. Partial Differential Equations {\bf 41}
(2011), no. 1-2, 203--240.

\bibitem{CV2} {\sc L. Caffarelli and E. Valdinoci},
{\em Regularity properties of nonlocal minimal surfaces via
limiting arguments}, preprint, {\tt
http://arxiv.org/abs/1105.1158}

\bibitem{guida} {\sc E. Di Nezza, G. Palatucci and E. Valdinoci},
{\em Hitchhiker's guide to the fractional Sobolev spaces},
Bull. Sci. math. {\bf 136} (2012), no. 5, 521--573.

\bibitem{GT} {\sc D. Gilbarg and N. S. Trudinger},
{\em Elliptic partial differential equations of second order},
Classics in Mathematics, Springer, Berlin (2001).

\bibitem{SV1} {\sc O. Savin and E. Valdinoci},
{\em Density estimates for a variational model driven by the
Gagliardo norm}, preprint, {\tt http://arxiv.org/abs/1007.2114}

\bibitem{SV2} {\sc O. Savin and E. Valdinoci},
{\em $\Gamma$-convergence for nonlocal phase transitions},
Ann. Inst. H. Poincar\'e Anal. Non Lin\'eaire {\bf 29} (2012), no. 4, 479--500.

\bibitem{SVc} {\sc O. Savin and E. Valdinoci},
{\em Regularity of nonlocal minimal cones in dimension $2$}, Calc. Var. Partial Differential
Equations, DOI: 10.1007/s00526-012-0539-7,
{\tt http://www.springerlink.com/content/467n313161531332}


\bibitem{PhS}{\sc L. Silvestre},
{\em Regularity of the obstacle problem for a fractional power of
the Laplace operator}, Ph.D. Thesis, Austin University, 2005,
available at {\tt
http://www.math.uchicago.edu/$\sim$luis/preprints/luisdissreadable.pdf}

\end{thebibliography}
\end{document}